\def\rr{{\mathbb R}}
\def\rn{{{\rr}^n}}
\def\zz{{\mathbb Z}}
\def\nn{{\mathbb N}}
\def\fz{\infty}
\def\az{\alpha}
\def\dist{{\mathop\mathrm{\,dist\,}}}
\def\loc{{\mathop\mathrm{\,loc\,}}}
\def\kz{\kappa}
\def\gz{{\gamma}}
\def\wz{\widetilde}
\def\diam{{\mathop\mathrm{\,diam\,}}}
\def\r{\right}
\def\lf{\left}
\newtheorem{thm}{Theorem}[section]
\newtheorem{lem}[thm]{Lemma}
\newtheorem{rem}[thm]{Remark}
\newtheorem{cor}[thm]{Corollary}
\numberwithin{equation}{section}
\begin{document}
\arraycolsep=1pt

\title [ Orlicz-Besov imbedding  and globally $n$-regular domains]
{ Orlicz-Besov imbedding  and globally $n$-regular domains}

\author{Hongyan Sun }

\address {School of Sciences, China University of Geosciences, Beijing, 100083, P.R. China}
                    \email{sun$\_$hy@cugb.edu.cn}

\maketitle

\begin{center}
\begin{minipage}{13.5cm}\small{\noindent{\bf Abstract}\quad
   Denote by $ {\bf\dot B}^{\alpha,\phi}(\Omega)$ the Orlicz-Besov space,
   where $\alpha\in\rr$, $\phi$ is a Young  function and $\Omega\subset\rn$ is a domain.
For $\alpha\in(-n,0)$ and optimal $\phi$, in this paper  we    characterize domains supporting the imbedding ${\bf\dot B}^{\alpha,\phi}(\Omega)$ into $ L^{n/|\alpha|}(\Omega)$ via globally $n$-regular domains.
 This extends the known  characterizations for
  domains supporting the Besov  imbedding ${\bf\dot B} ^s_{pp}(\Omega)$ into $ L^{np/(n-sp)}(\Omega)$ with $s\in(0,1)$ and $1\le p<n/s$. The proof of the imbedding ${\bf\dot B}^{\alpha,\phi}(\Omega)\to L^{n/|\alpha|}(\Omega)$ in
     globally $n$-regular domains $\Omega$ relies on a geometric inequality involving $\phi$ and $\Omega$ , which extends a known geometric inequality of Caffarelli et al.
}
\end{minipage}
\end{center}

\section{Introduction\label{s1}}

Suppose that $\phi $  is a Young function in $[0,\fz)$,
that is,  \begin{equation}
\label{young}\mbox{$\phi\in C([0,\fz))$ is  convex and satisfies
 $\phi(0)=0$, $\phi(t)>0$  for $t>0$ and $\lim  _{t \to \infty} \phi (t)= \infty$.}\end{equation}
Let $n\ge 2$ and   $\Omega\subset\rn$ be a  domain.
 For $\alpha\in\rr$,   the homogenous  Orlicz-Besov space  ${\bf\dot B} ^{\alpha,\phi}(\Omega)$ is defined as the  space of
  all measurable functions $u $ in $\Omega$  whose (semi-)norms
  \begin{eqnarray*}
\|u\|_{{\bf\dot B} ^{\alpha,\phi} (\Omega)} := \inf \left\{\lambda > 0 : \int_{\Omega} \int_\Omega  \phi \left(\frac{|u(x)-u(y)|}{\lambda |x-y|^{\alpha}}\right) \frac{dxdy}{|x-y|^{2n}}\leq 1 \right\}
\end{eqnarray*}
are finite.  Modulo constant functions,
${\bf\dot B} ^{\alpha,\phi}(\Omega)$   is a  Banach space.
 Recall that the space ${\bf\dot B} ^{0,\phi }  $ in metric spaces was introduced by Piaggio \cite{c}
in the study of  geometric group theory, and the  extension and imbedding properties of ${\bf\dot B} ^{0,\phi } (\Omega)$ were considered by Liang-Zhou \cite{lz}.

The Orlicz-Besov spaces extend the Besov (or fractional Sobolev) spaces. For $s>0$ and $p\ge1$, denote by  ${\bf\dot B} ^s_{pp}(\Omega)$
 the  Besov spaces (also called fractional Sobolev space) equipped with the (semi)-norms
\begin{equation}\label{bw}
 \|u\|_{{\bf\dot B} ^s_{pp}(\Omega)}:=   \lf(\int_{\Omega} \int_{\Omega}  \frac{|u(x)-u(y)|^p}{|x-y|^{ n+s p}} \,dxdy\r)^{1/p}.
\end{equation}
Letting $\alpha=s-n/p$ and $\phi(t)=t^p$,
we always have  $\|u\|_{{\bf\dot B} ^{\alpha,\phi}(\Omega)}=
\|u\|_{{\bf\dot B} ^{s}_{pp}(\Omega)}$  for all possible $u$, and hence ${\bf\dot B} ^{\alpha,\phi}(\Omega)={\bf\dot B} ^{s}_{pp}(\Omega)$.
Thus,   Orlicz-Besov spaces include the   Besov spaces as special examples.

 Note that when $s\in(0,1)$ and $p\ge1$,
 ${\bf\dot B} ^{s}_{pp}(\Omega)$  is non-trivial, indeed, $C_c^1(\Omega)\subset {\bf\dot B} ^{s}_{pp}(\Omega)$.
But  when $s\ge1$ and $p\ge1$, thanks to the Poincar\'e inequality,
 ${\bf\dot B} ^{s}_{pp}(\Omega)$ is trivial, that is,
 contain at most  constant functions; see for example \cite[Theorems 4.1\&4.2]{gkz13}.
In general, to guarantee $C_c^1(\Omega)\subset {\bf\dot B} ^{\alpha,\phi} (\Omega)$ and hence the  non-triviality of  ${\bf\dot B} ^{\alpha,\phi} (\Omega)$, we assume that
$\phi $  be a Young function   satisfying
 \begin{equation}\label{delta0} \underline\Lambda_\phi(\alpha):=\sup_{x>0}
\int_0^1\frac{\phi( t^{1-\alpha} x)}{ \phi(x)}\frac{dt}{t^{n+1} }<\infty
\end{equation}
 and
 \begin{equation}\label{deltafz}\overline\Lambda_\phi(\alpha) := \sup_{x>0}\int_1^{\infty} \frac{\phi( t^{-\alpha} x)}{ \phi(x)} \frac{dt}{t^{n+1} }<\infty;\end{equation}
 see Lemma \ref{l2.2}.
 For the  optimality of \eqref{delta0} and \eqref{deltafz} we refer to Remark \ref{r1.4} and Remark \ref{r2.4}.
 Under \eqref{delta0} and \eqref{deltafz}, the interesting range of $\alpha$ is $(-n,1)$.
 Indeed, by the convexity of $\phi$, when $\alpha\ge1$, we always have
$$ \underline\Lambda_\phi(\alpha)\ge
\int_0^1 t^{1-\alpha}\frac{dt}{t^{n+1} }=\infty,
$$
that is, \eqref{delta0} fails.
Moreover, \eqref{deltafz} always holds when $\alpha\ge0$, but  when $\alpha\le -n$, by the convexity of $\phi$  and letting $0<c\in \partial\phi(1)$, we always have
    $$  \overline\Lambda_\phi(\alpha)\ge \sup_{x>0}\int_1^\fz \frac{\phi(1)+c(t^{-\alpha}x-1)}{\phi(x)}\frac{dt}{t^{n+1}}=\sup_{x>0}
    \left[\frac{\phi(1)-c  }{n\phi(x)}  + \frac{c x}{\phi(x)}\int_1^\fz \frac{dt}{t^{n+\alpha+1}}\right]=
    \fz,$$
    that is,  \eqref{deltafz} fails. Young functions satisfies \eqref{delta0} and \eqref{deltafz} include
    $t^{n/(1-\alpha)}[\ln(1+t)]^\gz$ with $\gz>1$ and $\alpha\in(-n+1,0)$,
    $t^p[\ln(1+t)]^\gz$ with $ p\in(n/(1-\alpha),  n/|\alpha|)\cap[1,n/|\alpha|)$ and $\gz\ge1$ or $\gz=0$, and also their convex combinations.

In this paper, we are interesting in  $\alpha\in(-n,0)$. In this case, the homogeneity of ${\bf\dot B} ^{\alpha,\phi }(\rn)$  is the same as
that of the Lebesgue space $L^{n/|\alpha|}(\rn) $, that is,
$$ \mbox{$\|u(r\cdot)\|_{{\bf\dot B} ^{\alpha,\phi } (\rn)}= r^{-\alpha} \|u \|_{{\bf\dot B} ^{\alpha,\phi } (\rn)}$
and   $\|u(r\cdot)\|_{L ^{n/|\alpha| } (\rn)}= r^{-\alpha} \|u \|_{L ^{n/|\alpha| } (\rn)}$ for any $r>0$ and function  $u$. }$$
 Viewing the theory for Sobolev spaces and Besov spaces, this enable us to establish the following imbeddings  of
$ {\bf\dot B} ^{\alpha,\phi } (\rn)$ into $L ^{n/|\alpha| } (\rn)$, and $ {\bf\dot B} ^{\alpha,\phi } (B)$ into $L ^{n/|\alpha| } (B)$ for all balls $B\subset\rn$.
\begin{thm}\label{mainthm0}
Let $\alpha\in (-n,0)$  and $\phi $  be a Young function   satisfying \eqref{delta0} and \eqref{deltafz}.
 Then there exists a constant $C\ge 1$ depending only on $n$, $\alpha$ and $\phi$ such that
 $$
    \|u-u_B\|_{L^{n/|\alpha|}(B)}\le C\|u\|_{{\bf\dot B}^{\alpha,\phi}(B)}\quad
   \forall\ \mbox{balls $B$ and } u\in  {\bf\dot B}^{\alpha,\phi}(B)$$
  and
 $$
\inf_{c\in\rr}\|u-c\|_{L^{n/|\alpha|}(\rn)}\le C\|u\|_{{\bf\dot B}^{\alpha,\phi}(\rn)}\quad \forall\ u\in {\bf\dot  B}^{\alpha,\phi}(\rn).
$$
 \end{thm}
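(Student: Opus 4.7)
The plan is to prove the ball inequality first and then deduce the $\rn$ statement by exhaustion. Throughout, set $\lz := \|u\|_{{\bf\dot B}^{\alpha,\phi}(B)}$ so that the defining double integral at parameter $\lz$ is $\le 1$, and aim for $\|u-u_B\|_{L^{n/|\alpha|}(B)}\le C\lz$.

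The first step will be a weighted Jensen estimate on a ball. Because $\alpha\in(-n,0)$, the kernel $|x-y|^{\alpha}$ is integrable over balls, so for $x\in B$ of radius $r$ one defines the probability measure $d\mu_x(y) = |x-y|^{\alpha}\,dy/Z_x$ with $Z_x\sim r^{n+\alpha}$. Starting from $|u(x)-u_B|\le\fint_B|u(x)-u(y)|\,dy$ and applying Jensen's inequality to the convex function $\phi$ against $\mu_x$, one obtains
\begin{equation*}
\phi\!\left(\frac{|u(x)-u_B|}{C\lz\, r^{\alpha}}\right) \ls \frac{1}{r^{n+\alpha}}\int_B \phi\!\left(\frac{|u(x)-u(y)|}{\lz|x-y|^{\alpha}}\right)|x-y|^{\alpha}\,dy.
\end{equation*}
The same inequality holds with $B$ replaced by any concentric ball $B_k(x):=B(x,2^{-k}r)\cap B$ centered at $x$, providing scale-$k$ control of the local oscillation.

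The second step is to telescope. Lebesgue differentiation gives $u(x)=\lim_{k\to\fz}u_{B_k(x)}$ at a.e.\ $x$, so
\begin{equation*}
|u(x)-u_B| \le |u_{B_0(x)}-u_B| + \sum_{k=0}^{\fz}|u_{B_{k+1}(x)}-u_{B_k(x)}|,
\end{equation*}
and each summand is handled by the Step~1 estimate on the relevant ball. The hypotheses \eqref{delta0} and \eqref{deltafz}, which encode $\Delta_2$-type growth of $\phi$ at $0$ and at $\fz$ twisted by the homogeneity $t\mapsto t^{1-\alpha}$ and $t\mapsto t^{-\alpha}$ respectively, are then used to compare $\phi$-values between adjacent dyadic scales so that after applying $\phi^{-1}$ and summing one obtains a pointwise bound of the form $|u(x)-u_B|\le C\lz\,\mathcal{R}(x)$, where $\mathcal{R}$ is a fractional Riesz-like potential of a density with $L^1(B)$-mass at most $1$. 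Invoking the Hardy-Littlewood-Sobolev $L^{p}$ estimate for this potential at the critical exponent $p=n/|\alpha|$ (together with the convexity of $\phi$ to upgrade the Jensen-induced weak bound) yields $\|u-u_B\|_{L^{n/|\alpha|}(B)}\le C\lz$.

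For the $\rn$ estimate, exhaust by $B_j=B(0,2^j)$. The ball inequality gives $\|u-u_{B_j}\|_{L^{n/|\alpha|}(B_j)} \le C\|u\|_{{\bf\dot B}^{\alpha,\phi}(\rn)}$, uniformly in $j$, and the Step~1--Step~2 telescope applied across $B_j\subset B_{j+1}$ shows that $\{u_{B_j}\}$ is Cauchy, hence converges to some $c\in\rr$; Fatou then yields $\|u-c\|_{L^{n/|\alpha|}(\rn)}\le C\|u\|_{{\bf\dot B}^{\alpha,\phi}(\rn)}$. The principal obstacle throughout is the telescoping step: one has to arrange the dyadic decomposition and the scale-comparison of $\phi$ so that the series both produces a Riesz-type kernel and converges; the asymmetric roles of \eqref{delta0} (controlling the fine-scale behavior of $\phi$) and \eqref{deltafz} (controlling the coarse-scale behavior) are tailored precisely for this purpose, which is also why the interesting range of $\alpha$ is $(-n,0)$.
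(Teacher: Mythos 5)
Your approach is genuinely different from the paper's, and unfortunately it has a real gap at the crucial last step. The paper does not argue via pointwise Riesz-potential bounds at all. Instead it proves a geometric inequality (Lemma~\ref{l2.7x}, a generalization of Caffarelli--Valdinoci) of the form $\int_{\Omega\setminus E}\phi(t|x-y|^{-\alpha})|x-y|^{-2n}\,dy\gtrsim |E|^{-1}\phi(C_2 t|E|^{|\alpha|/n})$, then runs a \emph{level-set} argument (Lemma~\ref{l3.2}): after reducing to bounded nonnegative $u$ by truncation, it sets $A_k=\{u>2^k\}$, $D_k=A_k\setminus A_{k+1}$, applies the geometric inequality on each $D_l\times(\Omega\setminus A_{l-1})$, and sums the resulting lower bound for the double integral to control $\sum_k |A_k|2^{kn/|\alpha|}\sim\|u\|_{L^{n/|\alpha|}}^{n/|\alpha|}$. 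The median-value splitting (Lemma~\ref{l2.8x}) then removes the sign restriction, and the $\rn$ case is obtained exactly as you propose, by exhaustion with $B(0,2^k)$ (that part of your argument is fine).

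The gap in your Step 3 is the appeal to Hardy--Littlewood--Sobolev. The density you produce has total $L^1(B)$ mass at most $1$ (this is exactly what the Besov-norm normalization gives), and the kernel is $|x-y|^{\alpha}=|x-y|^{-(n-(n-|\alpha|))}$. The HLS theorem applied to an $L^1$ density is at the endpoint $p=1$, where one only gets the weak-type estimate $L^1\to L^{n/|\alpha|,\infty}$, not the strong-type $L^{n/|\alpha|}$ conclusion you need. (This is the same phenomenon as $W^{1,1}\hookrightarrow L^{n/(n-1)}$: the Riesz-potential representation $|u(x)|\lesssim I_1|\nabla u|(x)$ alone gives only weak type; the strong embedding requires the coarea/layer-cake argument, which is precisely what the paper's level-set decomposition supplies.) Your parenthetical ``convexity of $\phi$ to upgrade the Jensen-induced weak bound'' does not resolve this; convexity of $\phi$ has no evident mechanism to pass from weak to strong type at the HLS endpoint. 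A second, smaller issue: because $\phi^{-1}$ is nonlinear, summing the dyadic contributions $C\lz(2^{-k}r)^{\alpha}\phi^{-1}(g_k(x))$ does not actually linearize into a Riesz potential $\int|x-y|^{\alpha}f(y)\,dy$ of a single $L^1$ density; making that precise would itself require nontrivial use of \eqref{delta0}--\eqref{deltafz}, and even if it were done, the endpoint obstruction above remains.

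So the telescoping/Jensen part (Steps 1--2) is sound in spirit, and your handling of $\rn$ by exhaustion matches the paper. But the conversion to an $L^{n/|\alpha|}$ bound cannot go through HLS as stated; you need the layer-cake mechanism, and that is exactly what the paper packages into Lemma~\ref{l2.7x} and Lemma~\ref{l3.2}.
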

 For any  $s\in(0,1)$ and $1< p<n/s$, let $\alpha=s-n/p\in(-n,0)$. By ${\bf\dot B}^s_{pp}(\Omega)=  {\bf\dot B}^{\alpha,\phi}(\Omega)$ for all $\Omega\subset\rn$
 and $n/|\alpha|=np/(n-ps)$,
Theorem \ref{mainthm0} is exactly the following well-known   imbedding   of Besov spaces ${\bf\dot B}^s_{pp} $: 
there exists a constant $C>0$ depending only on $n,s,p$ such that
\begin{equation}\label{ebimb}
    \|u-u_B\|_{L^{np/(n-sp)}(B)}\le C\|u\|_{{\bf\dot B}^s_{pp}(B)}\quad
   \forall\ \mbox{balls $B$ and } u\in  {\bf\dot B}^s_{pp}(B)
   \end{equation}
  and
\begin{equation}\label{ebimrn}
\inf_{c\in\rr}\|u-c\|_{L^{np/(n-ps)}(\rn)}\le C\|u\|_{{\bf\dot B}^s_{pp}(\rn)}\quad \forall\ u\in {\bf\dot B}^s_{pp}(\rn)
   \end{equation}
see for example \cite{a75,gt,jw84,p76}.

Moreover, we   characterize  all domains $\Omega \subsetneq\rn$ supporting the imbedding of
$ {\bf\dot B} ^{\alpha,\phi } (\Omega)$ into $L ^{n/|\alpha| } (\Omega)$ via  globally $n$-regular domains.
Here, a domain $\Omega\subset\rn$ is    globally  $n$-regular  if  there exists a constant $ \theta\in(0,1) $ such that
$$|B(x,r)\cap\Omega|\ge \theta r^n\quad\forall\ x\in\Omega\ {\rm and}\ 0<r<2\diam\Omega.$$
Recall that in the literature
a domain $\Omega\subset\rn$ is  { \it  $n$-regular} (or satisfies the {\it measure density property}) if  there exists a constant $\theta\in(0,1)$ such that
 $$|B(x,r)\cap\Omega|\ge \theta r^n\quad\forall\ x\in\Omega\ {\rm and}\ 0<r<1,$$
 see \cite{jw78,jw84,s06,hkt08,z14} and references therein. Obviously, a bounded domain is  $n$-regular  if and only if it is globally
    $n$-regular. A unbounded globally
   $n$-regular domain is always  $n$-regular  but the converse is not true in general.

 \begin{thm}\label{mainthm}
Let $\alpha\in (-n,0)$  and $\phi $  be a Young  function    satisfying
\eqref{delta0} and \eqref{deltafz}.
 Then, a   domain $\Omega\subset\rn$ is    globally $n$-regular   if and only if 
    there exists a constant $C\ge 1$ such that \begin{eqnarray}\label{bdah}
 \|u-u_\Omega\|_{L^{n/|\alpha|}(\Omega)}\le C\|u\|_{{\bf\dot B}^{\alpha,\phi}(\Omega)}\quad \forall u\in {\bf\dot  B}^{\alpha,\phi}(\Omega)\quad {\rm\  when \ \diam\Omega<\fz},
\end{eqnarray}
 and  \begin{eqnarray}\label{ubdah}
 \|u \|_{L^{n/|\alpha|}(\Omega)}\le C\|u\|_{{\bf\dot B}^{\alpha,\phi}(\Omega)}\quad \forall u\in {\bf\dot  B}^{\alpha,\phi}(\Omega) \  {\rm having\ bounded\ supports} \quad {\rm when\ \diam\Omega=\fz}.
\end{eqnarray}
 \end{thm}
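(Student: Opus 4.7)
Both implications of the theorem are proved separately, and in both cases the proof treats the bounded and unbounded cases in parallel.

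\textbf{Sufficiency (globally $n$-regular $\Rightarrow$ imbedding).} Assume $\Omega$ is globally $n$-regular with constant $\theta$. The centerpiece of the argument is a geometric inequality of Orlicz-Besov type on $\Omega$, which extends the Caffarelli et al.\ inequality from $\rn$. Schematically, it takes the form
\[
\int_\Omega\int_\Omega\phi\!\left(\frac{|u(x)-u(y)|}{|x-y|^\alpha}\right)\frac{dxdy}{|x-y|^{2n}}\ \gtrsim\ \int_\Omega\phi\!\left(\frac{|u(x)-u_\Omega|}{L}\right)\frac{dx}{|\Omega|}
\]
for an appropriate normalizing constant $L$. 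The globally $n$-regular condition is essential here: the uniform lower bound $|B(x,r)\cap\Omega|\geq\theta r^n$ at every scale $0<r<2\diam\Omega$ allows the $\rn$-based arguments of Caffarelli et al.\ (rearrangement, layer-cake, covering) to be executed locally inside $\Omega$ with constants depending only on $\theta$. The growth conditions \eqref{delta0} and \eqref{deltafz} ensure that the Orlicz radial integrals one encounters converge and scale correctly. Once this geometric inequality is in hand, \eqref{bdah} is an immediate consequence upon normalizing the Luxemburg-type Orlicz norm. For \eqref{ubdah}, given $u$ supported in some $B(x_0,R)$, apply the bounded-case inequality on $B(x_0,R)\cap\Omega$ (which inherits the globally $n$-regular property with the same $\theta$); since $u$ vanishes on most of this intersection, letting $R\to\infty$ forces $u_{B(x_0,R)\cap\Omega}\to 0$, yielding \eqref{ubdah}.

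\textbf{Necessity (imbedding $\Rightarrow$ globally $n$-regular).} Assume the imbedding holds. Fix $x_0\in\Omega$ and $0<r<2\diam\Omega$, and suppose toward contradiction that $|B(x_0,r)\cap\Omega|<\epsilon r^n$ for arbitrarily small $\epsilon>0$. Construct a family of test functions $u_{r,\epsilon}$ adapted to the thin region, for instance $\chi_{B(x_0,r)\cap\Omega}$ (suitably mean-normalized in the bounded case) or a smoothed analog. Estimate $\|u_{r,\epsilon}\|_{L^{n/|\alpha|}(\Omega)}$ from below via the measure of the support, and $\|u_{r,\epsilon}\|_{{\bf\dot B}^{\alpha,\phi}(\Omega)}$ from above via the modular integral using \eqref{delta0} and \eqref{deltafz}, exploiting the lower-effective-dimensional behaviour of $B(x_0,r)\cap\Omega$ when $|B(x_0,r)\cap\Omega|\ll r^n$. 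As $\epsilon\to 0$, the ratio $\|u_{r,\epsilon}\|_{L^{n/|\alpha|}}/\|u_{r,\epsilon}\|_{{\bf\dot B}^{\alpha,\phi}}$ grows without bound, contradicting the uniform imbedding constant in \eqref{bdah}/\eqref{ubdah}. This forces the existence of a universal $\theta>0$ with $|B(x_0,r)\cap\Omega|\geq\theta r^n$.

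\textbf{Main obstacle.} The principal challenge is the Orlicz-Besov extension of the Caffarelli et al.\ geometric inequality for the sufficiency direction. The classical proof relies on $L^p$-duality together with rearrangement-type machinery, none of which transfer directly; they must be replaced by Young-function convexity arguments combined with the quantitative growth bounds \eqref{delta0} and \eqref{deltafz}. Careful bookkeeping is needed to ensure the final constants depend only on $n$, $\alpha$, $\phi$, and $\theta$. On the necessity side, the main subtlety is selecting a test family whose Orlicz-Besov norm reveals precisely the dimensional-scaling gap between $n$-regular and non-$n$-regular domains, rather than the trivial volume bound that simple smooth bumps would produce.
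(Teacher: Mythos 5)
Your sufficiency outline captures the right flavor: the key is a geometric inequality in the spirit of Caffarelli--Valdinoci/Savin--Valdinoci adapted to globally $n$-regular $\Omega$ and general Young functions, followed by a level-set argument. (For the record, the geometric lemma the paper actually proves is a pointwise bound, for fixed $x\in\Omega$ and $E\subset\Omega$ of finite measure, of the form
$\int_{\Omega\setminus E}\phi(t|x-y|^{-\alpha})|x-y|^{-2n}\,dy \gtrsim |E|^{-1}\frac{|\Omega\setminus E|}{|\Omega|}\phi(Ct|E|^{|\alpha|/n})$,
which is then fed into a dyadic level-set decomposition $A_k=\{u>2^k\}$, $D_k=A_k\setminus A_{k+1}$; your integral form is looser than what is needed but conceivably derivable.)

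However, there are two genuine gaps. First, your treatment of \eqref{ubdah} via restriction to $B(x_0,R)\cap\Omega$ followed by $R\to\infty$ does not work: the sets $B(x_0,R)\cap\Omega$ need not be domains (connectedness can fail) and, more importantly, they need not be globally $n$-regular with a uniform constant $\theta$ as $R$ varies. This is exactly the obstruction the paper singles out in Remark~3.3(iii) as the reason the direct approach cannot yield the stronger \eqref{bubdah}. The paper's actual proof of the unbounded case does not pass to bounded subdomains at all; instead it runs the same level-set argument directly on the unbounded $\Omega$, replacing the median-value normalization by the hypothesis $|\{|u|>a\}|<\infty$ for all $a>0$ (automatic once $u$ has bounded support), under which the geometric lemma still applies to each sublevel set $A_{l-1}$.

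Second, the necessity argument as sketched is missing its main idea. A single thin-region test function, say $u_{z,r/2,r}$ with $|B_\Omega(z,r)|=\epsilon r^n$, gives an upper bound on $\|u\|_{\dot{\mathbf B}^{\alpha,\phi}}$ of order $r^{-\alpha}[\phi^{-1}(1/\epsilon)]^{-1}$ (Lemma~2.5), but the lower bound you need on $\|u\|_{L^{n/|\alpha|}}$ is $|B_\Omega(z,r/2)|^{|\alpha|/n}$, and $|B_\Omega(z,r/2)|$ is not controlled from below in terms of $|B_\Omega(z,r)|$. Hence a single test function produces no contradiction. The paper instead constructs a decreasing sequence of radii $b_j r$ with $|B_\Omega(z,b_j r)|=2^{-j}|B_\Omega(z,r)|$, applies the imbedding to the annular cutoffs $u_{z,b_{j+1}r,b_jr}$ at every $j$, obtains $b_j r-b_{j+1}r\lesssim 2^{-j/n}|B_\Omega(z,r)|^{1/n}$, and then telescopes the sum $r=\sum_j(b_jr-b_{j+1}r)\lesssim |B_\Omega(z,r)|^{1/n}$; the growth condition \eqref{deltafz} enters through $\phi(Cs^{-\alpha})s^{-n}\to0$ which makes each individual step quantitative. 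Without some such multi-scale device, the necessity direction does not close. (There is also a further wrinkle in the bounded case, namely that \eqref{bdah} subtracts $u_\Omega$, so one must also argue that $u_\Omega$ is bounded away from $1$ for these test functions, which the paper does using $j\ge1$ and an auxiliary recentering when $b_1<1/10$.)
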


 As a consequence of Theorem \ref{mainthm}, we have the following results for inhomogeneous Orlicz-Besov spaces 
 ${\bf B}^{\alpha,\phi}(\Omega):= L^\phi(\Omega)\cap {\bf\dot B}^{\alpha,\phi}(\Omega)$ which are equipped with the norms
  $\|u\|_{{\bf  B}^{\alpha,\phi}(\Omega)}=\|u\|_{L^\phi(\Omega)}+ \|u\|_{ {\bf\dot B}^{\alpha,\phi}(\Omega)}.$
  Recall that $L^\phi(\Omega) $ is the Orlicz space, that is, the collection of measurable functions $u$ in $\Omega$ with
  $$\|u\|_{L^\phi(\Omega)}:=\inf\left\{\lambda>0: \int_{\Omega}\phi\left(\frac{|u(x)|}\lambda\right)\,dx\le1\right\}<\fz.$$

%
 \begin{cor}\label{c1.2}
 Let $\alpha\in (-n,0)$  and $\phi$ be a Young  function  satisfying \eqref{delta0} and \eqref{deltafz}.
A bounded  domain $\Omega\subset\rn$ is    $n$-regular   if and only if 
    there exists a constant $C\ge 1$ such that \begin{eqnarray}\label{bdahnon}
 \|u \|_{L^{n/|\alpha|}(\Omega)}\le C\|u\|_{  {\bf B}^{\alpha,\phi}(\Omega)}\quad \forall u\in   {\bf B}^{\alpha,\phi}(\Omega).
\end{eqnarray}
 \end{cor}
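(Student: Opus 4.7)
The plan is to reduce Corollary \ref{c1.2} to Theorem \ref{mainthm} via the observation already noted in the paper that, for a bounded domain, $n$-regular coincides with globally $n$-regular.

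\textbf{Sufficiency.} If $\Omega$ is bounded and $n$-regular it is globally $n$-regular, so Theorem \ref{mainthm} supplies \eqref{bdah}. Applying the triangle inequality,
$$\|u\|_{L^{n/|\alpha|}(\Omega)}\le\|u-u_\Omega\|_{L^{n/|\alpha|}(\Omega)}+|u_\Omega|\,|\Omega|^{|\alpha|/n},$$
the first term is at most $C\|u\|_{{\bf\dot B}^{\alpha,\phi}(\Omega)}$ by \eqref{bdah}, and for the second Jensen's inequality applied to the convex function $\phi$ gives, with $\mu=\|u\|_{L^\phi(\Omega)}$,
$$\phi(|u_\Omega|/\mu)\le\frac{1}{|\Omega|}\int_\Omega\phi(|u|/\mu)\,dx\le\frac{1}{|\Omega|},$$
so $|u_\Omega|\le \phi^{-1}(1/|\Omega|)\,\|u\|_{L^\phi(\Omega)}$. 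Combining yields \eqref{bdahnon}.

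\textbf{Necessity.} Assuming \eqref{bdahnon}, I would derive $n$-regularity directly by testing the inequality on bump functions. For $x_0\in\Omega$ and $0<r<1$, take $u(y)=\max\{0,\min\{1,\,2-|y-x_0|/r\}\}$, so $u\equiv 1$ on $B(x_0,r)$, $u\equiv 0$ off $B(x_0,2r)$, and $u$ has Lipschitz constant $r^{-1}$. Then $\|u\|_{L^{n/|\alpha|}(\Omega)}\ge|B(x_0,r)\cap\Omega|^{|\alpha|/n}$, and since $|u|\le 1$ with support in $B(x_0,2r)\cap\Omega$, a direct computation gives $\|u\|_{L^\phi(\Omega)}\le 1/\phi^{-1}\bigl(1/|B(x_0,2r)\cap\Omega|\bigr)$. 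The seminorm $\|u\|_{{\bf\dot B}^{\alpha,\phi}(\Omega)}$ is estimated by splitting the double integral according to whether $|x-y|\le r$ (where $|u(x)-u(y)|\le r^{-1}|x-y|$) or $|x-y|>r$ (where $|u(x)-u(y)|\le 1$); conditions \eqref{delta0} and \eqref{deltafz} are exactly calibrated to ensure both pieces are finite with the correct scaling in $r$. Substituting into \eqref{bdahnon} and rearranging yields $|B(x_0,r)\cap\Omega|\ge\theta r^n$ with $\theta$ independent of $x_0,r$.

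The main obstacle lies in the necessity direction, specifically in the seminorm computation on the bump and the delicate bookkeeping that balances the $L^\phi$ and seminorm contributions on the right of \eqref{bdahnon} against the lower bound on the left. This is essentially a localized instance of the geometric inequality that drives Theorem \ref{mainthm}, and the growth conditions \eqref{delta0}--\eqref{deltafz} on $\phi$ are engineered precisely to make this closure work.
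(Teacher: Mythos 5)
Your sufficiency direction is correct and matches the paper: reduce to globally $n$-regular, apply Lemma \ref{l2.8x} (i), and use Jensen's inequality to bound $|u_\Omega|$ by $\|u\|_{L^\phi(\Omega)}\phi^{-1}(1/|\Omega|)$.

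The necessity direction has a genuine gap, and it is precisely at the step you flag as ``delicate bookkeeping.'' Testing the inequality on a single bump $u$ with $u\equiv1$ on $B(x_0,r)$ and $\operatorname{supp} u\subset B(x_0,2r)$ gives, via Lemma \ref{l2.3} and Lemma \ref{l2.4},
$$|B_\Omega(x_0,r)|^{|\alpha|/n}\le\|u\|_{L^{n/|\alpha|}(\Omega)}\le C\|u\|_{{\bf B}^{\alpha,\phi}(\Omega)}\lesssim r^{-\alpha}\Big[\phi^{-1}\Big(\tfrac{r^n}{|B_\Omega(x_0,2r)|}\Big)\Big]^{-1}+\Big[\phi^{-1}\Big(\tfrac{1}{|B_\Omega(x_0,2r)|}\Big)\Big]^{-1}.$$
The left side involves $|B_\Omega(x_0,r)|$, the right side $|B_\Omega(x_0,2r)|$, and nothing controls their ratio: if $\Omega$ has a cusp near $x_0$, the inner ball may carry a negligible fraction of the outer ball's measure, and the estimate becomes vacuous. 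You cannot ``rearrange'' this into $|B_\Omega(x_0,r)|\ge\theta r^n$ without an extra ingredient. (Plugging in $\phi(t)=t^p$ shows that the deduction only closes \emph{if} one additionally knows $|B_\Omega(x_0,2r)|\lesssim|B_\Omega(x_0,r)|$.)

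The paper resolves this by an adaptive multiscale test: it fixes $z,r$, defines $b_j\in(0,1]$ by $|B_\Omega(z,b_jr)|=2^{-j}|B_\Omega(z,r)|$, and tests the embedding on the bumps $u_{z,b_{j+1}r,b_jr}$. The point of this normalization is that the inner and outer balls of each bump now have comparable measure \emph{by construction}, and an absorption argument (using that $\phi(Cs^{-\alpha})s^{-n}\to0$ as $s\to\infty$, i.e.\ Lemma 2.1(ii)) yields a uniform bound $b_jr-b_{j+1}r\le\Lambda_C\,2^{-j/n}|B_\Omega(z,r)|^{1/n}$. Summing the telescoping series over $j$ then recovers $r\le\Lambda_C'|B_\Omega(z,r)|^{1/n}$. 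This is what the paper means by ``Considering Lemma \ref{l2.4}, by the same argument as in the case $|\Omega|=\fz$ in Theorem 1.1.'' Your single-scale bump is the $j=0$ term of this family, but without the adaptive choice of $b_j$ and the summation over scales it does not close the argument.
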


 For any  $s\in(0,1)$ and $1< p<n/s$,  
 by considering extension of ${\bf B}^s_{pp}(\Omega)$ fucntions as in 
 \cite{jw78,jw84,s06,s07,z14}, we know that
%
a domain $\Omega\subset\rn$ is    $n $-regular  if and only if 
there exists a constant $C>0$   such that \begin{equation}\label{ps4}
 \|u \|_{L^{np/(n-ps)}(\Omega)}\le C\|u\|_{ {\bf   B} ^s_{pp}(\Omega)}  \quad\forall u\in {\bf  B} ^s_{pp}(\Omega). 
\end{equation}
By considering the extension   of ${\bf \dot B}^s_{pp}(\Omega)$ functions
similarly, one also would prove that
a  bounded domain $\Omega\subset\rn$ is globally $n $-regular  if and only if 
there exists a constant $C>0$   such that
\begin{equation}\label{ps41}\|u- u_\Omega\|_{L^{np/(n-ps)}(\Omega)}\le C\|u\|_{{\bf\dot B} ^s_{pp}(\Omega)}  \quad\forall u\in {\bf\dot B} ^s_{pp}(\Omega).
\end{equation}
Letting $\alpha=s-n/p\in(-n,0)$ and $\phi(t)=t^p$, since
${\bf  B}^s_{pp}(\Omega)=  {\bf  B}^{\alpha,\phi}(\Omega)$ and ${\bf\dot B}^s_{pp}(\Omega)=  {\bf\dot B}^{\alpha,\phi}(\Omega)$, we know that
Corollary \ref{c1.2} and Theorem \ref{mainthm}   generalize  both of \eqref{ps4} and \eqref{ps41} when $\Omega$ is bounded domain. We refer to \cite{h96,h03,y03,hkt08,hkt08b,hit} for the characterization   of   domains
supporting the Hajlasz   Sobolev (Besov) imbedding into Lebesgue spaces via $n$-regular domains.

 \begin{rem}\label{r1.4}\rm
We remark that the assumption \eqref{deltafz} in Theorem 1.2 and Corollary \ref{c1.2} is   optimal for bounded domains in the following sense.
 Given any $\alpha\in(-n,0)$,
  the Young function $t^p$ with $p\ge1$ satisfies the assumption \eqref{deltafz} if and only if $p< -n/\alpha$.
 In the critical case   $\phi_0(x)=t^{ n/|\alpha|}$ with $\alpha\in(-n,0)$,
   \eqref{bdah} holds for any bounded domain $\Omega\subset \rn$.
   Indeed,  for any $u\in {\bf\dot B} ^{\alpha,\phi} (\Omega)$, by the H\"older inequality  we have
\begin{align*}
\|u-u_{B}\|_{L^{n/|\alpha|}} 
&\le|B|^{\alpha/n}
\left\{\int_{\Omega}\int_{\Omega}|u(x)-u(y)|^{n/|\alpha|}dxdy\right\}^{|\alpha|/n}\\
&\le \frac{|B|^{\alpha/n}}{(  \diam \Omega )^{\alpha }}\left\{
\int_{\Omega}\int_{\Omega}\phi_0\left(\frac{|u(x)-u(y)|}{|x-y|^{\alpha}}\right)
\frac{dxdy}{|x-y|^{2n}}\right\}^{|\alpha|/n},
\end{align*}
where $B$ is a fixed ball so that $2B\subset\Omega$.
Therefore, $$\int_{\Omega}\int_{\Omega}\phi_0
\left(\frac{|u(x)-u(y)|}{\frac{( \diam \Omega )^{\alpha }}{|B|^{\alpha/n}}\|u-u_{B}\|_{L^{n/|\alpha|}}|x-y|^{ \alpha}}\right)\frac{dxdy}{|x-y|^{2n}} \ge 1,$$
 which yields that
 $$ \|u-u_B\|_{L^{n/|\alpha|}(\Omega)}\le \frac{( \diam \Omega )^{\alpha}}{|B|^{\alpha/n}}\|u\|_{{\bf\dot B} ^{\alpha,\phi} (\Omega)} $$
 as desired.
\end{rem}
To prove   Theorems \ref{mainthm0}\&\ref{mainthm}, we establish the following  geometric inequality,
which extend a geometric inequality of Caffarelli-Valdinoci \cite{cv} and Savin-Valdinoci \cite{sv13} not only to general Young functions but also to globally $n$-regular domains; see Remark 3.3  (i).
%
\begin{lem}\label{l2.7x} Suppose that $\Omega\subset\rn$ is a globally $n$-regular domain.
Let $\alpha\in (-n,0)$  and $\phi : [0,\fz) \to [0,\infty)$  be a Young function satisfying \eqref{delta0} and \eqref{deltafz}.
 There exists  constants $C_1, C_2>0$ depending on $n,\alpha,\phi,\Omega$ such that
$$
\int_{ \Omega\setminus E  }\frac{\phi(t|x-y|^{-\alpha})}{|x-y|^{2n}}dy \ge C_1 \frac{1}{|E|} \frac{|\Omega\setminus E|}{| \Omega|} \phi(C_2t|E|^{|\alpha|/n} )
$$
 whenever $t>0$,    $x\in  \Omega$ and $E\subset  \Omega$ with $0<|E|<\fz$.
Here, if $|\Omega|=\fz$, we let $\frac{|\Omega\setminus E|}{| \Omega|}=1$.
 \end{lem}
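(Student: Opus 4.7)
The plan is to run a Caffarelli--Valdinoci-style annulus argument adapted to the globally $n$-regular setting, with a case split on the relative size of $|E|$ and $|\Omega|$. Fix $x\in\Omega$, write $\theta$ for the global $n$-regularity constant, and set $R:=(2|E|/\theta)^{1/n}$. \textbf{Case 1} ($R<2\diam\Omega$): By global $n$-regularity $|B(x,R)\cap\Omega|\ge\theta R^n=2|E|$, so $|B(x,R)\cap(\Omega\setminus E)|\ge|E|$. Choose the inner radius $r:=(|E|/(2\omega_n))^{1/n}$, so that $|B(x,r)|=|E|/2$; note $r$ and $R$ are both comparable to $|E|^{1/n}$ with constants depending only on $n,\theta$. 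On the annulus $A:=B(x,R)\setminus B(x,r)$ one then has $|A\cap(\Omega\setminus E)|\ge|E|/2$, and $|x-y|\in[r,R]$ combined with monotonicity of $\phi$ and $|x-y|^{-2n}\ge R^{-2n}$ yields
\begin{equation*}
\frac{\phi(t|x-y|^{|\alpha|})}{|x-y|^{2n}}\,\ge\,\frac{\phi(tr^{|\alpha|})}{R^{2n}}\,\sim\,\frac{\phi(C_2 t|E|^{|\alpha|/n})}{|E|^2}.
\end{equation*}
Integrating over $A\cap(\Omega\setminus E)$ gives $I(x)\gtrsim|E|^{-1}\phi(C_2 t|E|^{|\alpha|/n})$, which already dominates the claimed bound since $|\Omega\setminus E|/|\Omega|\le1$ (and equals $1$ by convention when $|\Omega|=\fz$, in which case we are always in Case~1).

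\textbf{Case 2} ($R\ge 2\diam\Omega$): This forces $|E|\gtrsim\diam(\Omega)^n$; together with the limit form of $n$-regularity $|\Omega|\ge\theta(2\diam\Omega)^n$ and the trivial bound $|\Omega|\le\omega_n\diam(\Omega)^n$, the three quantities $|E|$, $|\Omega|$ and $\diam(\Omega)^n$ are mutually comparable with constants depending only on $n,\theta$. Every $y\in\Omega\setminus E$ satisfies $|x-y|\le\diam\Omega$, so a lower bound on $I(x)$ reduces to a uniform lower bound on the integrand on $(0,\diam\Omega]$. I would deduce from \eqref{deltafz} that the upper Matuszewska--Orlicz index of $\phi$ is strictly less than $n/|\alpha|$, hence strictly less than $2n/|\alpha|$; this translates into the pointwise scaling estimate
$\phi(w_2)/w_2^{2n/|\alpha|}\le C_\phi\,\phi(w_1)/w_1^{2n/|\alpha|}$ for $w_1\le w_2$,
which is equivalent to the approximate monotonicity $g(s_1)\ge C_\phi^{-1}g(s_2)$ for $s_1\le s_2$, where $g(s):=\phi(ts^{|\alpha|})/s^{2n}$ is the integrand as a function of $s=|x-y|$. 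Applying this with $s_2=\diam\Omega$ and integrating,
\begin{equation*}
I(x)\,\ge\,C_\phi^{-1}\,|\Omega\setminus E|\cdot\frac{\phi(t\diam\Omega^{|\alpha|})}{\diam(\Omega)^{2n}}\,\gtrsim\,\frac{|\Omega\setminus E|}{|E|\,|\Omega|}\,\phi(C_2 t|E|^{|\alpha|/n}),
\end{equation*}
which is exactly the claimed inequality.

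The main technical obstacle is the Case~2 step of converting the integrated condition \eqref{deltafz} into the pointwise scaling bound $\phi(w_2)/\phi(w_1)\le C(w_2/w_1)^{2n/|\alpha|}$ uniformly in $w_1\le w_2$. This is the standard passage from a Matuszewska-type integrability condition to a pointwise power bound for Young functions; I would carry it out as a short separate lemma or extract it from Lemma \ref{l2.2}. Once that quantitative monotonicity is in hand, both cases produce constants $C_1,C_2$ depending only on $n$, $\alpha$, $\phi$, and the $n$-regularity constant $\theta$ of $\Omega$, matching the statement.
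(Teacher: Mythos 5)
Your proposal is correct, and in the small-$E$ regime it takes a genuinely different (and arguably cleaner) route than the paper's. Both proofs split on whether $|E|$ is comparable to $(\diam\Omega)^n$; in the large-$E$ case both invoke the same power-scaling bound \eqref{deltafzz} (this is precisely the pointwise estimate you flag as a ``technical obstacle'' to be extracted — it is already available as \eqref{deltafzz} in the paper, not buried in Lemma~\ref{l2.2}, and the exponent $n/|\alpha|$ there already suffices; you do not need $2n/|\alpha|$) together with the comparability $|E|\sim|\Omega|\sim(\diam\Omega)^n$. In the small-$E$ case the two routes genuinely diverge. You pick a single annulus $B(x,R)\setminus B(x,r)$ centered at $x$, with $\theta R^n=2|E|$ and $|B(x,r)|=|E|/2$ (note $r<R$ because $\theta\le\omega_n$), use global $n$-regularity once to show $\Omega\setminus E$ occupies at least $|E|/2$ of this annulus, and bound the integrand there using only monotonicity of $\phi$ and the elementary bound $|x-y|^{-2n}\ge R^{-2n}$ — no further use of the doubling-type inequality. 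The paper instead takes $r$ with $|E|=|\Omega\cap B(x,r)|$, runs a rearrangement step — exploiting the measure identity $|(\Omega\setminus E)\cap B(x,r)|=|E\setminus B(x,r)|$ together with two applications of \eqref{deltafzz} to replace $\int_{\Omega\setminus E}$ by $\int_{\Omega\setminus B(x,r)}$ — and then invokes the annulus-nonemptiness property \eqref{e3.2} to locate a ball $B(z,r)\subset\Omega\cap(B(x,3\kz r)\setminus B(x,r))$ carrying $\Omega$-measure $\gtrsim r^n$. Your annulus sits at $x$ itself, avoiding both the rearrangement and the hunt for a displaced ball; both routes actually prove, in this regime, the stronger bound without the harmless factor $|\Omega\setminus E|/|\Omega|\le1$, and both reach the stated inequality with constants depending only on $n,\alpha,\phi$ and the regularity constant $\theta$.
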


In Section 3, applying  Lemma \ref{l2.7x},  using median values  and
improving some argument of Di Nezza et al \cite{dpv}
we prove in a direct way that  globally $n$-regular domains supporting ${\bf\dot B}^{\alpha,\phi}$-imbedding
  \eqref{bdah} or \eqref{ubdah}.  By approximating $\rn$ by balls $B(0,R)$, which have uniform  globally $n$-regular constants, we also derive Theorem 1.1.
Conversely, by  precise estimates of ${\bf\dot B}^{\alpha,\phi}$-norms  of
certain cut-off functions  as in Section 2, borrowing the ideas from Haj\l asz et al \cite{hkt08} and Zhou \cite{z14} we prove  that
domains supporting ${\bf\dot B}^{\alpha,\phi}$-imbeddings
as in \eqref{bdah} or \eqref{ubdah} are globally $n$-regular. Thus Theorem 1.2 holds.
We remark that this  gives a direct and new proof for \eqref{ebimb} and \eqref{ebimrn},
and also for \eqref{ps41} and  \eqref{ps4} in bounded $n$-regular domains; see Remark 3.3  (ii).


 \begin{rem}\rm
Under the assumption of Theorem \ref{mainthm},
when $\Omega$ is a unbound domain, we also have the following  characterization:
$\Omega$ is globally $n$-regular if and only if   there exists constant $C>0$ such that
 \begin{eqnarray}\label{bubdah}
 \inf_{c\in\rr}\|u-c\|_{L^{n/|\alpha|}(\Omega)}\le C\|u\|_{{\bf\dot B}^{\alpha,\phi}(\Omega)}\quad \forall u\in {\bf\dot  B}^{\alpha,\phi}(\Omega).
\end{eqnarray}
which is better than \eqref{ubdah}.
Moreover, Corollary \ref{c1.2} also holds when $\Omega$ is unbounded.  However, the direct approach above fails to prove them; see Remark 3.3 for a reason.  Instead, in a forth-coming paper \cite{sun2}, for full range $\alpha\in(-n,1)$ and Young functions $\phi$ satisfying  \eqref{delta0} and \eqref{deltafz}, we   consider the extension  of
Orlicz-Besov spaces,  then together with Theorems \ref{mainthm0} \&\ref{mainthm} and Corollary \ref{c1.2} above,  fully characterize     Orlicz-Besov imbedding domains.
\end{rem}

Finally, we make some convention on the notations or notion used in this paper.
Throughout the paper,  $C$ denotes a positive constant, which depend  only on  $n,\alpha, \phi,\Omega$ but whose value might be change from line to line.   We write $A\lesssim(\gtrsim) B$ if there exists a constant $C>0$ such that $A\le(\ge) CB$.   We use $\fint_B f(x)dx$ to denote the average $\frac{1}{|B|}\int_B f(x)dx$ of a function $f$ in a  set $B$ with positive measure.
For any $x\in\Omega$ and $A\subset \mathbb{R}$, $\dist(x,A)$ denote the distance from $x$ to $A$, $\diam A$ denote the diameter of the set $A$.

\section{Some basic properties}

We list several basic properties  of Orlicz-Besov spaces in this section. Let us begin with two simple properties of Young  functions.  Note that   Young  functions are always strictly increasing.
\begin{lem} Let $\alpha\in (-n,0)$  and $\phi$  be a Young  function  .

(i) If $\phi$  satisfies \eqref{delta0}, then
\begin{equation}\label{delta00}  \phi ( xs ) \le 2^{2n}{\underline\Lambda_\phi(\alpha)}  {\phi(2^{1-\alpha}x)}s^{n/(1-\alpha)}\quad\forall 0<s\le 1, x>0.  \end{equation}

 (ii) If $\phi$  satisfies \eqref{deltafz}, then
  $\phi ( x s^{-\alpha})s^{-n }\to0$  as $s\to\infty$ for any $x>0$ and
\begin{equation}\label{deltafzz}  \phi ( xs ) \le 2^{3n}{\overline\Lambda_\phi(\alpha)}  {\phi(x)}s^{-n/\alpha}\quad\forall s\ge 1, x>0.  \end{equation}
\end{lem}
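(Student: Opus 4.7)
Both parts rest on the same mechanism: the finiteness of the integrals defining $\underline\Lambda_\phi(\alpha)$ and $\overline\Lambda_\phi(\alpha)$ forces their integrands to be small on short subintervals, and restricting to such an interval and invoking monotonicity of $\phi$ together with that of the power $t\mapsto t^{1-\alpha}$ (resp.\ $t\mapsto t^{-\alpha}$) converts this averaged bound into a pointwise one.

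For (i), I would first replace $x$ by $2^{1-\alpha}x$ in \eqref{delta0} and then substitute $u=2t$, which enlarges the domain of integration to $u\in(0,2]$ and produces
$$
\int_0^2 \frac{\phi(u^{1-\alpha}x)}{\phi(2^{1-\alpha}x)}\frac{du}{u^{n+1}} \le 2^{-n}\underline\Lambda_\phi(\alpha).
$$
Restricting to $u\in[\sigma,2\sigma]\subset(0,2]$ for any $\sigma\in(0,1]$ and using $1-\alpha>0$ to apply monotonicity of $u\mapsto u^{1-\alpha}$ and of $\phi$, the integrand is at least $\phi(\sigma^{1-\alpha}x)/[\phi(2^{1-\alpha}x)\,u^{n+1}]$. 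Since $\int_\sigma^{2\sigma} u^{-n-1}du=(1-2^{-n})/(n\sigma^n)$, one obtains
$$
\phi(\sigma^{1-\alpha}x) \le \frac{n}{2^n(1-2^{-n})}\underline\Lambda_\phi(\alpha)\,\sigma^n\,\phi(2^{1-\alpha}x).
$$
Setting $s=\sigma^{1-\alpha}$ (so $\sigma^n=s^{n/(1-\alpha)}$ and $\sigma\in(0,1]\iff s\in(0,1]$) yields \eqref{delta00}, with a constant well below the stated $2^{2n}$.

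For (ii), the same restriction applied directly to \eqref{deltafz} on $t\in[\sigma,2\sigma]$ with $\sigma\ge 1$, using that $-\alpha>0$ makes $t\mapsto t^{-\alpha}$ increasing, gives
$$
\phi(\sigma^{-\alpha}x)\le \frac{n}{1-2^{-n}}\overline\Lambda_\phi(\alpha)\,\sigma^n\,\phi(x).
$$
Substituting $s=\sigma^{-\alpha}$, which is a bijection $[1,\infty)\to[1,\infty)$ since $-1/\alpha>0$, delivers \eqref{deltafzz} with constant comfortably below $2^{3n}$. For the limit $\phi(xs^{-\alpha})s^{-n}\to 0$ as $s\to\infty$, the convergence of the integral in \eqref{deltafz} forces the tail $\int_s^{2s}\phi(t^{-\alpha}x)\,t^{-n-1}\,dt\to 0$, and the same monotonicity-based lower bound by a constant multiple of $\phi(s^{-\alpha}x)/s^n$ transmits this to the desired limit.

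\textbf{Main obstacle.} There is no substantive difficulty here — this is a technical warm-up that will be absorbed into later arguments. The one nuance worth flagging is the coverage of the full range $s\in(0,1]$ in (i): a naive restriction of the integral in \eqref{delta0} to $[\sigma,2\sigma]\subset(0,1]$ requires $\sigma\le 1/2$, hence $s\le 2^{\alpha-1}<1/2$, leaving a gap near $s=1$. The pre-dilation of the argument of $\phi$ by $2^{1-\alpha}$ — which is exactly why the statement carries $\phi(2^{1-\alpha}x)$ rather than $\phi(x)$ — is the device that enlarges the usable $u$-range to $(0,2]$ and closes the gap in one stroke.
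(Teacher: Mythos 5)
Your proof is correct and rests on the same mechanism as the paper's: restrict the integral in \eqref{delta0} (after pre-dilating the argument of $\phi$ by $2^{1-\alpha}$ and rescaling $u=2t$) or \eqref{deltafz} to a dyadic-scale subinterval $[\sigma,2\sigma]$, then use monotonicity of $\phi$ and of the power to convert the integral bound into the desired pointwise bound. The paper phrases this with a discrete supremum over dyadic intervals $(2^{-j-1},2^{-j}]$ rather than your continuous family of intervals $[\sigma,2\sigma]$, but the key device — the pre-dilation by $2^{1-\alpha}$ that closes the gap near $s=1$, which you correctly flag — is identical, so this is essentially the same argument.
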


\begin{proof}
(i) Since $\phi$ is increasing, by \eqref{delta0}    we have
 \begin{align*} \sup_{s\in(0,1]}\frac{\phi( xs^{1-\alpha})}{\phi( x2^{1-\alpha} )s^n} &\le \sup_{j\ge0}\sup_{s\in(2^{-j-1},2^{-j}]}\frac{\phi( xs^{1-\alpha})}{\phi( x2^{1-\alpha} )s^n} \le 2^{3n} \sup_{j\ge0}\int_{2^{-j}}^{2^{-j+1}} \frac{\phi( xs^{1-\alpha})}{\phi( x2^{1-\alpha} ) }\frac{ds}{s^{ n+1}} \\
 &\le 2^{2n} \sup_{j\ge0}\int_{2^{-j-1}}^{2^{-j }} \frac{\phi( x2^{\alpha+1}s^{1-\alpha})}{\phi( x2^{1-\alpha} ) }\frac{ds}{s^{ n+1}} \le
 2^{2n} \int_0^1 \frac{\phi( x2^{1-\alpha}s^{1-\alpha})}{\phi( x2^{1-\alpha} ) }\frac{ds}{s^{ n+1}}\le 2^{2n}
 {\underline\Lambda_\phi(\alpha)}  \quad\forall x>0,
 \end{align*}
 which gives \eqref{delta00}.

(ii)
 Similarly, for all $x>0$ and $j\ge0$ we have
 \begin{align*}
  \sup_{s\in[2^{j},2^{j+1})}
  \frac{\phi ( xs^{-\alpha} )}{s^{ n}\phi(x)}  \le 2^{2n+1 }  \int_{2^{j+1}}^{2^{j+2}}  \frac{\phi ( xs^{-\alpha} )}{\phi(x)} \frac{ds}{s^{n+1}}
   \le \int_{2^{j+1}}^\fz  \frac{\phi ( xs^{-\alpha} )}{\phi(x)} \frac{ds}{s^{n+1}}
    , \end{align*}
  which implies that  $\phi(xs^{-\alpha})s^{-n}\to0$ as $s\to0$.
  Moreover, this also implies that
\begin{align*}\sup_{s\ge1} \frac{\phi ( xs^{-\alpha} )}{s^{ n}\phi(x)} &\le
 \sup_{j\ge0}\sup_{s\in[2^{j},2^{j+1})}  \frac{\phi ( xs^{-\alpha} )}{s^{ n}\phi(x)}  \le 2^{3n}\int_2^\fz  \frac{\phi ( xs^{-\alpha} )}{\phi(x)} \frac{ds}{s^{n+1}}
  \le 2^{3n}{\overline\Lambda_\phi(\alpha)} \quad\forall x>0, \end{align*}
 which gives \eqref{deltafzz}.
\end{proof}

\begin{lem}\label{l2.1}  Let $\alpha\in (-n,0)$  and $\phi$  be a Young  function.  For any domain $\Omega\subset\rn$,
we have ${\bf  B} ^{\alpha,\phi}(\Omega)\subset {\bf\dot B} ^{\alpha,\phi}(\Omega) \subset L^1_\loc(\Omega  )$ as sets.
\end{lem}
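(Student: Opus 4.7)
The first inclusion ${\bf B}^{\alpha,\phi}(\Omega) \subset {\bf\dot B}^{\alpha,\phi}(\Omega)$ is immediate from the definition ${\bf B}^{\alpha,\phi}(\Omega) := L^\phi(\Omega) \cap {\bf\dot B}^{\alpha,\phi}(\Omega)$; the substance lies in proving ${\bf\dot B}^{\alpha,\phi}(\Omega) \subset L^1_\loc(\Omega)$. The plan rests on two elementary observations. First, convexity of $\phi$ with $\phi(0)=0$ makes $t \mapsto \phi(t)/t$ nondecreasing, so $\phi(t) \ge \phi(1)\,t$ for $t \ge 1$ and consequently
$$\int_E |f|\,dx \le |E| + \frac{1}{\phi(1)}\int_E \phi(|f|)\,dx$$
for any measurable $f$ on a set $E$ of finite measure. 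Second, because $\alpha<0$, the weights $|x-y|^{-\alpha}$ and $|x-y|^{-2n}$ appearing in the Besov integrand are bounded above and below by positive constants on any pair of well-separated subsets of $\Omega$, so restriction to such a pair \emph{gains} integrability rather than losing it.

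Concretely, I fix $u \in {\bf\dot B}^{\alpha,\phi}(\Omega)$ and choose $\lambda>0$ so that the defining integral of $u/\lambda$ is at most $1$. For each $x_0 \in \Omega$ and $0 < r < \dist(x_0,\partial\Omega)/6$, I set $B_1 := B(x_0,r)$ and $B_2 := B(z_0,r)$ with $|z_0-x_0|=4r$, so that $B_1,B_2 \subset \Omega$ and $2r \le |x-y| \le 6r$ for all $(x,y) \in B_1 \times B_2$. Discarding the complement of $B_1 \times B_2$ in $\Omega \times \Omega$ and invoking monotonicity of $\phi$ yields
$$\int_{B_1}\int_{B_2}\phi\!\left(\frac{|u(x)-u(y)|\,(2r)^{|\alpha|}}{\lambda}\right)dx\,dy \le (6r)^{2n},$$
and applying the first observation to $f(x,y) := |u(x)-u(y)|(2r)^{|\alpha|}/\lambda$ on $E = B_1 \times B_2$ converts this into $\int_{B_1 \times B_2}|u(x)-u(y)|\,dx\,dy < \infty$. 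By Fubini there exists $y_0 \in B_2$ with $u(y_0) \in \rr$ and $\int_{B_1}|u(x)-u(y_0)|\,dx < \infty$, whence the triangle inequality gives $u \in L^1(B_1)$.

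Covering an arbitrary compact set $K \subset \Omega$ by finitely many balls $B_1$ of this form (e.g.\ with common radius $r := \dist(K,\partial\Omega)/12$) then completes the proof. The argument is essentially routine; the one point requiring some care is justifying that $u$ is real-valued almost everywhere so that a finite $u(y_0)$ can be selected, which follows from the fact that $\phi(t) \to \infty$ forces the Besov integrand to be finite a.e.\ and hence a Fubini argument rules out the degenerate situation where $u = \pm\infty$ on a set of positive measure.
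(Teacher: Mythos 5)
Your proof is correct and follows essentially the same route as the paper's: both exploit the comparability of $|x-y|$ to $r$ on a pair of well-separated balls inside $\Omega$, use convexity of $\phi$ to pass from the Orlicz integral to $L^1$, and apply Fubini to isolate a good reference point. The paper picks the reference point $x$ first via Fubini and then applies Jensen's inequality to $\fint_B$, whereas you first convert the double integral to $L^1$ via the elementary bound $\phi(t)\ge\phi(1)t$ for $t\ge1$ and then extract $y_0$ by Fubini; this reordering is cosmetic.
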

\begin{proof}
Let $u\in{\bf\dot B}^{\alpha,\phi} (\Omega)$ and
 $\lambda>\|u\|_{{\bf\dot B}  ^{\alpha,\phi}(\Omega)}$,
we have
$$\int_\Omega\int_\Omega \phi \left(\frac{|u(x)-u(y)|}{\lambda |x-y|^{\alpha}}\right)\,\frac{dydx}{|x-y|^{2n}}\le1.$$
By Fubini's theorem,  for almost all $x\in  \Omega$ we have
$$\int_\Omega \phi \left(\frac{|u(x)-u(y)|}{\lambda |x-y|^{\alpha}}\right)\,\frac{ dy}{|x-y|^{2n}}<\fz.$$
For any $B=B(z,r)$ with $z\in\Omega$ and $r<\frac13\diam\Omega$, choose an $x\in 3B\setminus 2B$ satisfying above inequality. Then $r\le |x-y|\le 4r$ for all $ y\in B$,  and hence
 $$\int_{B} \phi \left(\frac{|u(x)-u(y)|}{   \lambda r^{\alpha} }\right)\frac{dy}{(4r)^{2n}} \le \int_{B} \phi \left(\frac{|u(x)-u(y)|}{   \lambda |x-y|^{\alpha} }\right)\,\frac{dydx}{|x-y|^{2n}}  <\fz.$$
By Jessen's inequality, we have
 $$ \phi\left ( \fint_B \frac{|u(x)-u(y)|}{  \lambda r^{\alpha} }\, dy \right) <\fz $$
which implies that
    $ \fint_B  |u(x)-u(y)| \, dy<\fz$
  that is, $u\in L^1(B)$.  This completes the proof of Lemma \ref{l2.1}.
\end{proof}

\begin{lem}\label{l2.2}
Let  $ \Omega\subset\rn$ be a domain.
Assume that $\alpha\in (-n,0)$  and
 $\phi$  is a Young  function   satisfying \eqref{delta0}  and, when $\diam\Omega=\fz$, also satisfying \eqref{deltafz}.
 Then $C_c^1(\Omega)\subset {\bf  B} ^{\alpha,\phi}(\Omega)\subset {\bf\dot B} ^{\alpha,\phi}(\Omega) $ as sets.
\end{lem}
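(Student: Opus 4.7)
The inclusion ${\bf B}^{\alpha,\phi}(\Omega)\subset {\bf\dot B}^{\alpha,\phi}(\Omega)$ is immediate from the definition of the $ {\bf B}^{\alpha,\phi}$-norm, so the main task is to show that any $u\in C_c^1(\Omega)$ belongs to ${\bf B}^{\alpha,\phi}(\Omega)$. Fix such a $u$, let $K:=\supp u\Subset\Omega$ and set $M:=\max\{\|u\|_\fz,\|\nabla u\|_\fz\}$. The $L^\phi$ estimate is immediate: for any $\lambda>0$,
$$\int_\Omega\phi(|u(x)|/\lambda)\,dx\le |K|\,\phi(M/\lambda),$$
which is $\le 1$ for $\lambda$ large, so $\|u\|_{L^\phi(\Omega)}<\fz$.

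It remains to bound $\|u\|_{{\bf\dot B}^{\alpha,\phi}(\Omega)}$. Since the integrand vanishes on $(\Omega\setminus K)\times (\Omega\setminus K)$, using symmetry in $x,y$ we may reduce to
$$I(\lambda):=\int_\Omega\int_\Omega\phi\!\left(\frac{|u(x)-u(y)|}{\lambda|x-y|^\alpha}\right)\!\frac{dxdy}{|x-y|^{2n}}\le 2\int_K dx\int_\Omega\phi\!\left(\frac{|u(x)-u(y)|}{\lambda|x-y|^\alpha}\right)\!\frac{dy}{|x-y|^{2n}}.$$
The plan is to split the inner integral into $\{y:|x-y|\le 1\}$ and $\{y:|x-y|>1\}$, apply the Lipschitz bound on the first and the $L^\fz$ bound on the second, and then invoke the two growth conditions \eqref{delta0} and \eqref{deltafz}.

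On $\{|x-y|\le 1\}$, use $|u(x)-u(y)|\le M|x-y|$, so the integrand is $\le \phi(M|x-y|^{1-\alpha}/\lambda)|x-y|^{-2n}$. Passing to polar coordinates in $z=y-x$ and applying \eqref{delta0} at the point $M/\lambda$, one obtains
$$\int_{|z|\le 1}\phi(M|z|^{1-\alpha}/\lambda)\,\frac{dz}{|z|^{2n}}=n\omega_n\int_0^1\phi(Mt^{1-\alpha}/\lambda)\,\frac{dt}{t^{n+1}}\le n\omega_n\,\underline\Lambda_\phi(\alpha)\,\phi(M/\lambda).$$
On $\{|x-y|>1\}$, use $|u(x)-u(y)|\le 2M$. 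If $\diam\Omega<\fz$, then $|x-y|^{|\alpha|}\le (\diam\Omega)^{|\alpha|}$ and the integrand is dominated by $\phi(2M(\diam\Omega)^{|\alpha|}/\lambda)|x-y|^{-2n}$, whose integral over $\{|z|>1\}$ is at most a constant times $\phi(2M(\diam\Omega)^{|\alpha|}/\lambda)$. If $\diam\Omega=\fz$, one instead uses \eqref{deltafz} at the point $2M/\lambda$ to obtain
$$\int_{|z|>1}\phi(2M|z|^{-\alpha}/\lambda)\,\frac{dz}{|z|^{2n}}=n\omega_n\int_1^\fz\phi(2Mt^{-\alpha}/\lambda)\,\frac{dt}{t^{n+1}}\le n\omega_n\,\overline\Lambda_\phi(\alpha)\,\phi(2M/\lambda).$$

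Combining the two pieces yields $I(\lambda)\le C|K|[\phi(M/\lambda)+\phi(c_\Omega M/\lambda)]$ for a constant $c_\Omega$ depending on whether $\diam\Omega$ is finite. Since $\phi$ is continuous with $\phi(0)=0$, the right-hand side tends to $0$ as $\lambda\to\fz$, so $I(\lambda)\le 1$ for all sufficiently large $\lambda$, giving $\|u\|_{{\bf\dot B}^{\alpha,\phi}(\Omega)}<\fz$. The proof is essentially bookkeeping; the only point requiring care is the dichotomy in treating the far-diagonal term, where \eqref{deltafz} becomes indispensable precisely when $\Omega$ is unbounded, matching the hypothesis of the lemma.
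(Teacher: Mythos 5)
Your proof is correct and uses the same key ingredients as the paper's: a near/far decomposition of the double integral, the Lipschitz bound on $u$ combined with \eqref{delta0} for the near-diagonal piece, and the $L^\infty$ bound combined with boundedness of $\Omega$ (or \eqref{deltafz} when $\diam\Omega=\infty$) for the far-diagonal piece, followed by continuity of $\phi$ at $0$ to send $\lambda\to\infty$. The only structural difference is where the cut is made: the paper introduces an auxiliary bounded open set $W$ with ${\rm supp}\, u\subset W\subset\Omega$ and splits into $W\times W$ and ${\rm supp}\,u\times(\Omega\setminus W)$, using $\dist({\rm supp}\,u,\partial W)$ as the lower scale, whereas you restrict one variable to ${\rm supp}\,u$ and cut at the fixed distance $|x-y|=1$ — a mild simplification that dispenses with $W$ but is otherwise the same argument, both relying implicitly on the fact that $u$ extended by zero is globally Lipschitz on $\rn$.
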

\begin{proof} Let $u\in C_c^1(\Omega)$. Obviously, $u\in L^\phi(\Omega)$.
  To see $u\in {\bf\dot B} ^{\alpha,\phi}(\Omega) $, let $V={\rm\,supp\,}u $, and   $W\subset \Omega$ be a bounded open set so that $V\subset W$. Then
\begin{align*}
H&=\int_\Omega\int_\Omega\phi\left(\frac{|u (z)-u (w)|}{\lambda|z-w|^{\alpha}}\right)\frac{dzdw}{|z-w|^{2n}}\\
&= \int_W\int_{W}\phi\left(\frac{|u (z)-u (w)|}{\lambda|z-w|^{\alpha}}\right)\frac{dzdw}{|z-w|^{2n}}+
2\int_{\Omega\setminus W}\int_{V}\phi\left(\frac{u(z)}{\lambda | z-w|^{ \alpha}} \right)\frac{dzdw}{|z-w|^{2n}}\\
&=:H_1+H_2.
\end{align*}
It then suffices to show that $H_1\le 1/2$ and $H_2\le 1/2$ when $\lambda$ is sufficiently large.
Write  $L=\|Du\|_{L^\fz(\Omega)}$. By \eqref{delta0} one has
\begin{align*}
 H_1 &\le \int_W\int_{B(w,  \diam  W) }\phi\left(\frac{| z-w|^{1-\alpha}}{\lambda/L} \right)\frac{dz}{|z-w|^{2n}}dw\\
&=\omega_n\int_W\int_0^{ \diam W  }\phi\left(\frac{t^{1-\alpha}}{\lambda/L} \right)\frac{ dt}{t^{n+1}}\,dw\\
&=\omega_n|W|( \diam  W )^{-n} \int_0^1\phi\left(\frac{ (\diam  W )^{1-\alpha}t^{1-\alpha}}{\lambda/L  } \right)\frac{ dt}{t^{n+1}}\\
&\le {\underline\Lambda_\phi(\alpha)} \omega_n|W|( \diam W )^{-n}\phi\left(\frac{( \diam W)^{1-\alpha}L  }{\lambda}\right),
\end{align*}
and hence  $ H_1\le 1/2 $ when $\lambda>0$ is large enough.    If $\Omega$ is bounded,   then
\begin{align*}
H_2\le 2 \dist(V,\partial W)^{-2n}|\Omega\setminus W||V|\phi\left(\frac{\|u\|_{L^\fz(\Omega)}( \diam\Omega)^{|\alpha|}}{\lambda  }\right),
\end{align*}
and hence  $ H_2\le 1/2 $ when $\lambda$ is large enough.
If $\Omega$ is unbounded, by \eqref{deltafz} we have
 \begin{align*}
H_2
&\le 2\int_V \int_{\rn\setminus W}\phi\left(\frac{\|u\|_{L^\fz(\Omega)}}{\lambda | z-w|^{ \alpha}} \right)\frac{ dw}{|z-w|^{2n}}\,dz\\
&\le 2\omega_n\int_V \int_{\dist(V,\partial W)}^\fz\phi\left(\frac{\|u\|_{L^\fz(\Omega)}}{\lambda t^{ \alpha}} \right)\frac{ dt}{t^{ n+1}}\,dz\\
&\le 2\omega_n [\dist(V,\partial W)]^{-n}|V|
\int_{1}^\fz\phi\left(\frac{\|u\|_{L^\fz(\Omega)}}{\lambda [\dist(V,\partial W)]^\alpha t^{ \alpha}} \right)\frac{ dt}{t^{ n+1}} \\
&\le 2\omega_n [\dist(V,\partial W)]^{-n}|V| \overline\Lambda_\phi(\alpha) \phi\left(\frac{\|u\|_{L^\fz(\Omega)}}{\lambda [\dist(V,\partial W)]^\alpha  } \right),
\end{align*}
 hence  $ H_2\le 1/2 $ when  $\lambda$ is large enough.
 This completes the proof of Lemma \ref{l2.2}.
\end{proof}

\begin{rem}\label{r2.4}\rm
(i) The  assumption \eqref{delta0}   is optimal to guarantee $C^1_c(\Omega)\subset {\bf\dot B} ^{\alpha,\phi}(\Omega)$  and hence the non-triviality of ${\bf\dot B} ^{\alpha,\phi}(\Omega)$ in the following sense.
%
   For $\alpha\ge 1-n$ and $p\ge1$, by a direct calculation, 
      the Young function  $ \phi(t)=t^p$   
 satisfies   \eqref{delta0} if and only if $p>n/(1-\alpha)$.
Note that  ${\bf\dot B}^{\alpha,\phi}(\Omega)=
{\bf\dot B}^{n/p+\alpha}_{pp}(\Omega)$. By \cite[Theorems 4.1\&4.2]{gkz13},  ${\bf\dot B}^{n/p+\alpha}_{pp}(\Omega)$,
and hence ${\bf\dot B}^{\alpha,\phi}(\Omega)$, is  nontrivial (or contains  $C^1_c(\Omega)$) if and only if $s=n/p+\alpha<1$, that is,  $p>n/(1-\alpha)$.

(ii) In the case $\diam\Omega=\fz$, \eqref{deltafz} is optimal to guarantee
$C^1_c(\Omega)\subset {\bf\dot B} ^{\alpha,\phi}(\Omega)$ or the nontriviality of ${\bf\dot B} ^{\alpha,\phi}(\Omega)$ in the following sense.
 Indeed,  the Young function $ t^p$ with $p\ge1$ satisfies \eqref{deltafz} if and only if $p<n/|\alpha|$.
 Let $\phi(t)=t^{n/|\alpha|}$ and $\Omega$ be any unbounded globally $n$-regular domain.  We see that
$\|u\|_{{\bf\dot B} ^{\alpha,\phi}(\Omega)}=\fz$ for any $u\in C_c^1(\Omega)$ and $u\not\equiv0$.
Indeed, for any $\lambda>0$, let $H$ and $H_2$ be as in the proof of Lemma \ref{l2.2}.
Moreover, let   $V_0=\{x\in V: |u(x)|> \|u\|_{L^\fz(\Omega)}/2\} $
 and  $W=B(z_0,4\diam V)$ with $z_0\in V$. For all $w\in\Omega\setminus W$ and $z\in V$, $|z-z_0|\le \diam V, |z_0-w|\ge 4\diam V$, it deduces that $|z-z_0|\le\frac14|z_0-w|$. Then $|z-w|\le |z_0-w|+|z-z_0|\le  \frac54 |z_0-w|.$
 Thus
\begin{align*}
H_2
&\ge  \frac{\|u\|^{n/|\alpha|}_{L^\fz(\Omega)}}{(2\lambda)^{n/|\alpha|}  }  \int_{V_0} \int_{\Omega\setminus W} \frac{dzdw}{|z-w|^{ n}}\ge\frac{\|u\|^{n/|\alpha|}_{L^\fz(\Omega)}}{(2\lambda)^{n/|\alpha|} (\frac54)^{n} }|V_0|  \int_{\Omega\setminus W} \frac{ dw}{|z_0-w|^{ n}}.
\end{align*}
But
\begin{align*}\int_{\Omega\setminus W} \frac{ dw}{|z_0-w|^{ n}} &  \ge  \sum_{j\ge 2} 2^{-jk_0n}(\diam V)^{-n}|\Omega\cap [B(z_0, 2^{(j+1)k_0}\diam V)\setminus B(z_0, 2^{jk_0}\diam V)]|\\
&\ge  \sum_{j\ge 2} 2^{-jk_0n} 2^{jk_0n } =\fz,\end{align*}
where    $k_0$ is a postiche integer satisfying $\theta 2^{ k_0n}\ge 2\omega_n$ so that
\begin{align*} |\Omega\cap [B(z_0,2^{i+k_0}\diam V)\setminus B(z_0,2^{i }\diam V)]|
&= |\Omega\cap  B(z_0,2^{i+k_0}\diam V)|- |\Omega\cap B(z_0,2^{i }\diam V) |\\
&\ge  \theta 2^{ k_0n} 2^{in }(\diam V)^n- \omega_n 2^{in }(\diam V)^n\ge 2^{in }(\diam V)^n.\end{align*}
Therefore, we always $H\ge H_2=\fz$, which means that $u\notin {\bf\dot B} ^{\alpha,\phi}(\Omega)$.
\end{rem}

To end this section, we calculate  $ {\bf B} ^{\alpha,\phi}(\Omega)$ and
${\bf\dot B} ^{\alpha,\phi}(\Omega)$-norms of some special functions, which will be used in Sections 3 and 4.

For $x\in \Omega$ and $0<r<t< \frac12\diam\Omega$, let $B_\Omega(x,t):=\Omega\cap B(x,t)$ and $B_\Omega(x,r):=\Omega\cap B(x,r)$, set  the function
\begin{eqnarray}\label{defu}
u_{x,r,t}(z)=\left\{\begin{array}{ll}1& z\in B_\Omega(x,r);\\
\frac{t-|x-z|}{t-r}\quad & z\in B_\Omega(x,t)\setminus B_\Omega(x,r);\\
0& z\in\Omega\setminus B_\Omega(x,t).
\end{array}\r.
\end{eqnarray}

\begin{lem}\label{l2.3}
Let $\alpha\in (0,n)$  and $\phi  $  be a Young function satisfying \eqref{delta0} and \eqref{deltafz}. Then there exists a constant $C>0$ depending on $n,\alpha$ and $\phi$ such that  for any domain $\Omega\subset\rn$,
  $x\in \Omega$ and $0<r<t<\frac12\diam\Omega$,  we have $u_{x,r,t}\in {\bf\dot B} ^{\alpha,\phi}(\Omega)$  and
$$\|u_{x,r,t}\|_{{\bf\dot B} ^{\alpha,\phi}(\Omega)}\le C (t-r)^{-\alpha}\left [\phi^{-1}\left(\frac{(t-r)^n}{|B_\Omega(x,t)| }\right)\right]^{-1}.$$
\end{lem}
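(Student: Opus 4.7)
The plan is to bound the Luxemburg modular of $u:=u_{x,r,t}$ directly by picking $\lambda$ proportional to the asserted upper bound and verifying that the resulting modular is $\le 1$. The two ingredients that make this work are the elementary pointwise estimate
\[
|u(z)-u(w)|\le \min\!\left(1,\frac{|z-w|}{t-r}\right)\qquad\text{for all } z,w\in\Omega,
\]
which follows from the global $(t-r)^{-1}$-Lipschitz property of $u$ and the sup bound $0\le u\le 1$, together with the fact that $u$ vanishes outside $V:=B_\Omega(x,t)$.

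First, by symmetry of the integrand and the support condition on $u$, I would reduce the modular
\[
I(\lambda):=\int_\Omega\!\int_\Omega\phi\!\left(\frac{|u(z)-u(w)|}{\lambda|z-w|^\alpha}\right)\frac{dz\,dw}{|z-w|^{2n}}
\]
to at most twice the same integral taken over $(z,w)\in V\times\Omega$. For fixed $z\in V$ I would then split the inner integral in $w$ at the natural scale $|z-w|=t-r$: on the near region $|z-w|\le t-r$ apply the Lipschitz bound, and on the far region $|z-w|>t-r$ apply the sup bound. Enlarging the integration domain from $\Omega$ to $\rn$, passing to polar coordinates centred at $z$, and substituting $\rho=(t-r)s$, the inner integral becomes at most
\[
\frac{\omega_n}{(t-r)^n}\left(\int_0^1\phi(Xs^{1-\alpha})\frac{ds}{s^{n+1}} + \int_1^\infty\phi(Xs^{-\alpha})\frac{ds}{s^{n+1}}\right),\qquad X:=\frac{(t-r)^{-\alpha}}{\lambda}.
\]
At this point hypotheses \eqref{delta0} and \eqref{deltafz} apply verbatim with $x=X$, bounding the two integrals respectively by $\underline\Lambda_\phi(\alpha)\phi(X)$ and $\overline\Lambda_\phi(\alpha)\phi(X)$.

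Integrating the resulting estimate in $z\in V$ and setting $K:=2\omega_n[\underline\Lambda_\phi(\alpha)+\overline\Lambda_\phi(\alpha)]$, I would obtain
\[
I(\lambda)\le K\,\frac{|V|}{(t-r)^n}\,\phi\!\left(\frac{(t-r)^{-\alpha}}{\lambda}\right).
\]
Requiring $I(\lambda)\le 1$ inverts to the natural choice $\lambda=(t-r)^{-\alpha}/\phi^{-1}\bigl((t-r)^n/(K|V|)\bigr)$. The asserted form of the bound involves $\phi^{-1}((t-r)^n/|V|)$ instead; this discrepancy is resolved by the concavity of $\phi^{-1}$ (equivalently $\phi(sx)\le s\phi(x)$ for $s\in[0,1]$), which gives $\phi^{-1}(y/K)\ge K^{-1}\phi^{-1}(y)$ and so allows $K$ to be absorbed into the final constant $C=C(n,\alpha,\phi)$.

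The main, and essentially only, obstacle is the bookkeeping: recognising that splitting the inner integral at the scale $t-r$ matches the near and far regions precisely with hypotheses \eqref{delta0} and \eqref{deltafz}, so that both can be applied with the same scaled argument $X=(t-r)^{-\alpha}/\lambda$. Once this split is in place, the rest is a routine change of variables followed by the concavity manipulation on $\phi^{-1}$. Note that the argument does not use any property of $\Omega$ beyond being a domain, so the bound $|V|\le|B(x,t)|$ could be used to replace $|B_\Omega(x,t)|$ by $|B(x,t)|$ if desired, though for the applications in Sections~3 and~4 the sharper form involving $|B_\Omega(x,t)|$ is what is wanted.
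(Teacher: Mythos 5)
Your proof is correct, and it is a genuinely cleaner route than the paper's. The paper decomposes the modular into four pieces $H_1,\dots,H_4$ according to whether $z$ and $w$ lie in the inner ball $B_\Omega(x,r)$, the annulus $B_\Omega(x,t)\setminus B_\Omega(x,r)$, or outside $B_\Omega(x,t)$; estimating $H_3$ and $H_4$ then forces it to invoke the derived pointwise inequalities \eqref{delta00} and \eqref{deltafzz} from Lemma 2.1 on top of the hypotheses \eqref{delta0} and \eqref{deltafz}. Your observation that $|u(z)-u(w)|\le\min\bigl(1,|z-w|/(t-r)\bigr)$ holds uniformly (because $u$ is globally $(t-r)^{-1}$-Lipschitz and $0\le u\le1$, and for $w\notin B_\Omega(x,t)$ one has $|z-w|\ge t-|z-x|$, so the Lipschitz bound still dominates $u(z)$) collapses all four cases into a single split of the inner $w$-integral at the scale $|z-w|=t-r$. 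After the symmetry reduction $I(\lambda)\le 2\int_V\int_\Omega$ (valid since $u\equiv0$ on $\Omega\setminus V$), enlarging to $\rn$, passing to polar coordinates, and rescaling, the near and far pieces are matched \emph{directly} to \eqref{delta0} and \eqref{deltafz} with the same argument $X=(t-r)^{-\alpha}/\lambda$; Lemma 2.1 is never needed. The final absorption of $K$ into the constant via $\phi^{-1}(y/K)\ge K^{-1}\phi^{-1}(y)$ (concavity of $\phi^{-1}$) is the same device the paper uses in choosing $\lambda_0$, just presented on the $\phi^{-1}$ side rather than the $\phi$ side. So both proofs hinge on the same scale-$(t-r)$ split and the same two hypotheses, but yours avoids the case analysis and the auxiliary Lemma 2.1, yielding a shorter and tidier argument. (As a side note: the $\alpha\in(0,n)$ in the statement is evidently a typo for $\alpha\in(-n,0)$, which both your proof and the paper's assume.)
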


\begin{proof}
Write $u= u_{x,r,t}$ for simple.
Note that
\begin{align*}
H&:=\int_\Omega\int_\Omega\phi\left(\frac{|u (z)-u (w)|}
{\lambda|z-w|^{\alpha}}\right)\frac{dzdw}{|z-w|^{2n}}\\
&=
\int_{B_\Omega(x,t)}\int_{B_\Omega(x,t)}
\phi\left(\frac{|u (z)-u (w)|}{\lambda|z-w|^{\alpha}}\right)
\frac{dzdw}{|z-w|^{2n}} +2\int_{\Omega\setminus B_\Omega(x,t)}\int_{B_\Omega(x,t)}\phi\left(\frac{|  u (z)|}{\lambda|z-w|^{\alpha}}\right)\frac{dzdw}{|z-w|^{2n}} \\
&\le \int_{B_\Omega(x,t)}\int_{B_\Omega(w,t-r)}
\phi\left(\frac{|z-w|^{1-\alpha}}{\lambda(t-r)}\right)\frac{dz}{|z-w|^{2n}}dw\\
& \quad +
\int_{B_\Omega(x,t)}\int_{B_\Omega(w,2t)\setminus B_\Omega(w,t-r)}\phi\left(\frac{ |z-w|^{-\alpha}}{\lambda}\right)\frac{dz}{|z-w|^{2n}}dw\\
&\quad+ 2\int_{B_\Omega(x,t) \setminus B_\Omega(x,r)}\int_{\Omega\setminus B_\Omega(x,t)}\phi\left(\frac{(t-|z-x|)|z-w|^{-\alpha}}{\lambda(t-r)}\right)
\frac{dw}{|z-w|^{2n}}dz \\
& \quad + 2\int_{B_\Omega(x,r) }
\int_{\Omega\setminus B_\Omega(x,t)}\phi\left(\frac{|z-w|^{-\alpha}}{\lambda }\right)\frac{dw}{|z-w|^{2n}}dz\\
&=:H_1+H_2+2H_3+2H_4.
\end{align*}

By \eqref{delta0}, we have
\begin{align*} 
H_1&\le  \omega_n|B_\Omega(x,t )| \int_0^{t-r}  \phi\left(\frac{s^{1-\alpha}}{\lambda(t-r)}\right)\frac{ds}{s^{n+1}} \\
&
 = \omega_n \frac{|B_\Omega(x,t )|}{(t-r)^{n}} \int_0^{1}  \phi\left(\frac{s^{1-\alpha}(t-r)^{-\alpha}}{\lambda}\right)\frac{ds}{s^{n+1}}\le \omega_n{\underline\Lambda_\phi(\alpha)} \frac{|B_\Omega(x,t )|}{(t-r)^{n}} \phi\left(\frac{(t-r)^{-\alpha} }{\lambda }\right) .
\end{align*}
By \eqref{deltafz} we have
\begin{align*}
H_2&\le \omega_n  |B_\Omega(x,t )| \int_{t-r}^{\infty} \phi\left(\frac{s^{-\alpha}}{\lambda}\right) \frac{ds}{s^{n+1}}\\
&
 =\omega_n  \frac{|B_\Omega(x,t )|}{(t-r)^{n}} \int_{1}^{\infty} \phi\left(\frac{(t-r)^{-\alpha}s^{-\alpha}}{\lambda}\right) \frac{ds}{s^{n+1}}
\le   \omega_n  {\overline\Lambda_\phi(\alpha)} \frac{|B_\Omega(x,t )|}{(t-r)^{n}}\phi\left(\frac{ (t-r)^{-\alpha}}{\lambda}\right).
\end{align*}

 For any $z\in B_\Omega(x,t)\setminus B_\Omega(x,r)$, note that
 $\mbox{$ \Omega\setminus B_\Omega(x,t)\subset \Omega\setminus B_\Omega(z, t-|z-x|)$
 and $0<t-|z-x|<t-r$
 }.$
  By \eqref{deltafz} and \eqref{delta00} we have
\begin{eqnarray*}
\int_{\Omega\setminus B_\Omega(x,t)}\phi\left(\frac{(t-|z-x|)|z-w|^{-\alpha}}{\lambda(t-r)}\right)\frac{dw}{|z-w|^{2n}}&&\le
  \omega_n \int_{  t-|z-x|}^{\infty}  \phi\left(\frac{(t-|z-x|)s^{-\alpha}}{\lambda(t-r)}\right)\frac{ds}{s^{n+1}}\\
 &&\le   {\overline\Lambda_\phi(\alpha)}  \omega_n\frac{ 1 }{(t-|z-x|)^{n}}\phi\left(\frac{(t-|z-x|)^{1-\alpha}}{\lambda(t-r)}\right)\\
 &&
   \le 2^{2n} {\overline\Lambda_\phi(\alpha)}  {\underline\Lambda_\phi(\alpha)}\omega_n  \frac{1}{(t-r)^n}\phi\left(\frac{(t-r)^{-\alpha} }{\lambda/2^{1-\alpha}}\right)
 \end{eqnarray*}
 and hence,
\begin{align*}H_3
&\le 2^{2n} {\overline\Lambda_\phi(\alpha)}    {\underline\Lambda_\phi(\alpha)}\omega_n \frac{|B_\Omega(x,t)|}{(t-r)^n}\phi\left(\frac{(t-r)^{-\alpha} }{\lambda/2^{1-\alpha}}\right).
\end{align*}

Moreover, for any $z\in  B_\Omega(x,r)$, note that
  $\mbox{$ \Omega\setminus B_\Omega(x,t)\subset \Omega\setminus B_\Omega(z, t-|z-x|)$
 and $ t-|z-x|>t-r$
 }.$
 By \eqref{deltafz} and \eqref{deltafzz}
we obtain
 \begin{align*}\int_{\Omega\setminus B_\Omega(x,t)}\phi\left(\frac{|z-w|^{-\alpha}}{\lambda }\right)\frac{dw}{|z-w|^{2n}}&\le  {\overline\Lambda_\phi(\alpha)}  \omega_n \int_{  t-|z-x|}^{\infty}\phi\left(\frac{s^{-\alpha}}{\lambda}\right)\frac{dw}{s^{n+1}}\\
 &\le  {\overline\Lambda_\phi(\alpha)}  \omega_n\frac{  1 }{(t-|z-x|)^{n}}\phi\left(\frac{(t-|z-x|)^{-\alpha}}{\lambda }\right)
 \\&\le  2^{3n} {\overline\Lambda_\phi(\alpha)} ^{2 } \omega_n\frac{ 1 }{(t-r)^{n}}\phi\left(\frac{(t-r)^{-\alpha} }{\lambda}\right) .
 \end{align*}
and hence
\begin{align*}H_4%
&\le 2^{3n} {\overline\Lambda_\phi(\alpha)} ^{2 } \omega_n  \frac{|B_\Omega(x,t)|}{(t-r)^n}\phi\left(\frac{(t-r)^{-\alpha} }{\lambda }\right).
\end{align*}

Combining all above estimates together we conclude
 \begin{align*}
H\le M \frac{|B_\Omega(x,t)|}{(t-r)^n} \phi\left( \frac{(t-r)^{-\alpha} } {\lambda/2^{1-\alpha}}  \right),
\end{align*}
where  $M  =
 {\underline\Lambda_\phi(\alpha)}\omega_n + {\overline\Lambda_\phi(\alpha)}   \omega_n+ 2^{2n+1} {\overline\Lambda_\phi(\alpha)}  {\underline\Lambda_\phi(\alpha)}\omega_n+ 2^{3n+1} {\overline\Lambda_\phi(\alpha)} ^{2 }       \omega_n + 1 $.
Letting $$\lambda_0=
2^{1-\alpha}(t-r)^{-\alpha}M \left [\phi^{-1}\left(\frac{(t-r)^n}{|B_\Omega(x,t)| }\right)\right]^{-1}>0,$$  by $M \ge1$ and the convexity of $\phi$,
we have $\forall \lambda\ge \lambda_0$,
$$H \le
M \frac{|B_\Omega(x,t)|}{(t-r)^n} \phi\left(\frac1{M } \phi^{-1}\left(\frac{(t-r)^n}{|B_\Omega(x,t)| }\right)  \right)
\le 1.$$
Thus $\|u_{x,r,t}\|_{{\bf\dot B} ^{\alpha,\phi}(\Omega)}\le \lambda_0$ as desired.
\end{proof}

\begin{lem}\label{l2.4}
Let $\alpha\in (-n,0)$  and $\phi  $  be a Young function satisfying \eqref{delta0} and \eqref{deltafz}.
Assume  $\Omega\subset\rn$ is a bounded domain.
There exists a constant $C>0$ depending on $n,\alpha$, $\Omega$ and $\phi$ such that
  $x\in \Omega$ and $0<r<t<\frac12 \diam\Omega$,    $u_{x,r,t}\in {\bf B} ^{\alpha,\phi}(\Omega)$  and
$$\|u_{x,r,t}\|_{ {\bf B} ^{\alpha,\phi}(\Omega)}\le C (t-r)^{-\alpha}\left [\phi^{-1}\left(\frac{(t-r)^n}{|B_\Omega(x,t)| }\right)\right]^{-1}.$$
\end{lem}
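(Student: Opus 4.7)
The plan is to reduce to Lemma \ref{l2.3} and then estimate only the Orlicz norm $\|u_{x,r,t}\|_{L^\phi(\Omega)}$. By definition, $\|u_{x,r,t}\|_{{\bf B}^{\alpha,\phi}(\Omega)} = \|u_{x,r,t}\|_{L^\phi(\Omega)} + \|u_{x,r,t}\|_{{\bf\dot B}^{\alpha,\phi}(\Omega)}$, and Lemma \ref{l2.3} already bounds the second summand by the desired quantity. Hence it suffices to show
\[
 \|u_{x,r,t}\|_{L^\phi(\Omega)} \le C (t-r)^{-\alpha}\Bigl[\phi^{-1}\bigl((t-r)^n/|B_\Omega(x,t)|\bigr)\Bigr]^{-1},
\]
with $C$ depending on $n,\alpha,\phi,\Omega$.

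Since $0\le u_{x,r,t}\le \chi_{B_\Omega(x,t)}$, for any $\mu>0$ one has
\[
 \int_\Omega \phi\!\left(\frac{|u_{x,r,t}(z)|}{\mu}\right) dz \le |B_\Omega(x,t)|\,\phi(1/\mu).
\]
Set $A:=\phi^{-1}\bigl((t-r)^n/|B_\Omega(x,t)|\bigr)$, so $\phi(A)=(t-r)^n/|B_\Omega(x,t)|$, and take $\mu = C(t-r)^{-\alpha}A^{-1}$. By the definition of $\|\cdot\|_{L^\phi(\Omega)}$, the desired bound will follow once we verify $|B_\Omega(x,t)|\,\phi(C^{-1}(t-r)^\alpha A)\le 1$ for a suitable choice of $C\ge 1$.

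I would split into two cases according to the size of $s:=(t-r)^\alpha$. If $s\le 1$ (i.e.\ $t-r\ge 1$), convexity of $\phi$ together with $\phi(ty)\le t\phi(y)$ for $0<t\le 1$ gives
\[
 \phi\bigl(C^{-1}s A\bigr) \le C^{-1}s\,\phi(A) = C^{-1}(t-r)^{\alpha+n}/|B_\Omega(x,t)|,
\]
and since $t-r<\tfrac12\diam\Omega$ and $\alpha+n>0$, the factor $(t-r)^{\alpha+n}$ is controlled by $(\tfrac12\diam\Omega)^{\alpha+n}$, which can be absorbed into $C$. If $s>1$ (i.e.\ $t-r<1$), I use \eqref{deltafzz}, which for $s\ge1$ says $\phi(sy)\le 2^{3n}\overline\Lambda_\phi(\alpha)\,\phi(y)\,s^{-n/\alpha}$ with $-n/\alpha=n/|\alpha|$; applied with $y=C^{-1}A$ this yields
\[
 \phi\bigl(C^{-1}sA\bigr)\le 2^{3n}\overline\Lambda_\phi(\alpha)\,\phi(C^{-1}A)\,(t-r)^{-n}\le 2^{3n}\overline\Lambda_\phi(\alpha)\,C^{-1}|B_\Omega(x,t)|^{-1},
\]
using convexity once more. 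In either case, choosing $C$ large enough (depending on $n,\alpha,\phi,\diam\Omega$) makes $|B_\Omega(x,t)|\phi(C^{-1}(t-r)^\alpha A)\le 1$.

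The main (and only) technical point is the second case, where $\alpha<0$ forces $(t-r)^\alpha$ to blow up as $t-r\to 0$; the growth estimate \eqref{deltafzz} is tailored exactly to neutralize this blow-up, and the exponent $-n/\alpha=n/|\alpha|$ cancels precisely the $(t-r)^n$ produced by $\phi(A)$. Combining the resulting $L^\phi$ estimate with the seminorm bound from Lemma \ref{l2.3} yields the inhomogeneous bound and proves the lemma.
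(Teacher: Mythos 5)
Your proof is correct and follows essentially the same route as the paper: both arguments reduce the lemma to bounding the Orlicz norm $\|u_{x,r,t}\|_{L^\phi(\Omega)}$ (since Lemma \ref{l2.3} handles the seminorm), and both use the pointwise bound $0\le u_{x,r,t}\le \chi_{B_\Omega(x,t)}$ together with convexity of $\phi$ and the growth estimate \eqref{deltafzz}. The paper instead first records the clean intermediate bound $\|u_{x,r,t}\|_{L^\phi(\Omega)}\le[\phi^{-1}(1/|B_\Omega(x,t)|)]^{-1}$ and then proves the algebraic comparison $[\phi^{-1}(1/|B_\Omega(x,t)|)]^{-1}\le C(t-r)^{-\alpha}[\phi^{-1}((t-r)^n/|B_\Omega(x,t)|)]^{-1}$ in one stroke by applying \eqref{deltafzz} with $s=(\diam\Omega/(t-r))^{-\alpha}$, whereas you verify the modular inequality directly and split on whether $(t-r)^\alpha\gtrless 1$; the content is the same and your two cases are just a slightly different bookkeeping of the same inequalities.
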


\begin{proof}
Note that
  $$\|u_{x,r,t}\|_{L ^{ \phi}(\Omega)}\le \left[\phi^{-1}\left(\frac1{|B_\Omega(x,t)| }\right)\right]^{-1}.$$
  Indeed, for $\lambda> [\phi^{-1}(\frac1{|B_\Omega(x,t)| })]^{-1}$,
 since $u_{x,r,t}$ is supported in $B_\Omega(x,t)$ and $0\le   u_{x,r,t}\le1$,
 we have $$\int_\Omega\phi\left( \frac{u_{x,r,t}}\lambda\right)\le \phi(1/\lambda)
 |B_\Omega(x,t)|<1 $$ as desired.

  It then suffices to prove that   there exists a constant $C>0$ such that
  $$  \left[\phi^{-1}\left(\frac1{|B_\Omega(x,t)| }\right)\right]^{-1}\le C (t-r)^{-\alpha}\left [\phi^{-1}\left(\frac{(t-r)^n}{|B_\Omega(x,t)| }\right)\right]^{-1}.$$
Note that this is equivalent to
           $$  \frac{(t-r)^n}{|B_\Omega(x,t)| }  \le \phi\left(\phi^{-1}\left(\frac1{|B_\Omega(x,t)| }\right)  C{(t-r)^{-\alpha}}\right)  ,$$
   Since $\phi(x { (\diam\Omega) ^{-\alpha}})\le 2^{3n} {\overline\Lambda_\phi(\alpha)}(\frac{\diam\Omega}{t-r})^n\phi(x {(t-r)^{-\alpha}})   $ as in \eqref{deltafzz},
  it suffices to prove
  $$  \frac{ {2^{3n}\overline\Lambda_\phi(\alpha)} (\diam\Omega) ^n}{|B_\Omega(x,t)| }  \le  \phi\left(\phi^{-1}\left(\frac1{|B_\Omega(x,t)| }\right)  C{( \diam\Omega)^{-\alpha}}\right)  ,$$
  Choosing $C>( \diam\Omega)^{\alpha} ( {2^{3n}\overline\Lambda_\phi(\alpha)}( \diam\Omega)^n+1)$,
   by the convexity of $\phi$ we have
  \begin{align*}\phi\left(\phi^{-1}\left(\frac1{|B_\Omega(x,t)| }\right)  C{( \diam\Omega)^{-\alpha}}\right)
  &\ge \phi\left(\phi^{-1}\left(\frac1{|B_\Omega(x,t)| }\right) \right)  [C ( \diam\Omega)^{-\alpha}]\\
  &
  =\frac{C (\diam\Omega)^{-\alpha}}{|B_\Omega(x,t)|}\ge  \frac{ {2^{3n}\overline\Lambda_\phi(\alpha)}(\diam\Omega)^n}{|B_\Omega(x,t)| }
  \end{align*}
  as desired.
\end{proof}

\section{Proofs of Theorems \ref{mainthm0}\&\ref{mainthm} and Corollary \ref{c1.2}}



We begin with the proof of Lemma \ref{l2.7x}, which is motivated by \cite{cv,sv13} and also \cite{dpv}.

\begin{proof}[Proof of Lemma \ref{l2.7x}.]
  Let $\kz = [2 \omega_n/\theta ]^{1/n}+2$.
  Then
 \begin{eqnarray}\label{e3.2}
 \Omega\cap( B(z, \kz  s)\setminus  B(z,  s)) \ne\emptyset\  \forall z\in\Omega\ \mbox{and}\
  0<s<\frac2\kz\diam\Omega.
  \end{eqnarray}
Indeed, we have
$|\Omega\cap B(z, \kz  s)|\ge \theta  \kz ^n s^n$ and
and $|\Omega\cap B(z,  s)|\le \omega_n   s^n$    for all $z\in\Omega$ and
  $0<s<\frac2\kz\diam\Omega .$
Since
  $\theta  \kz ^n>2 \omega_n  $, we know that  $\Omega\cap( B(z, \kz  s)\setminus  B(z,    s))$ has positive measure.

Let $r\in(0,2\diam\Omega)$ such that
  $|E|=  | \Omega\cap B(x,r)|$,
  and moreover, $\theta r^n\le |E|\le \omega_n r^n.$
If $r\ge \frac1{8\kz}\diam\Omega$, then $|E|\ge C|\Omega|$ and $\diam\Omega<\infty$.
By \eqref{deltafzz},    for all $y\in \Omega,$  we have
$$\phi(t|x-y|^{-\alpha})
\ge \frac1{2^{3n}\overline \Lambda_\phi(\alpha)}\phi(t(  \diam\Omega)^{-\alpha}) \left(\frac{|x-y|}{  \diam\Omega}\right)^{n}\ge  \frac1{2^{3n}\overline \Lambda_\phi(\alpha)}\phi(t(  \diam\Omega)^{-\alpha}) \left(\frac{|x-y|}{  \diam\Omega}\right)^{2n},$$
and hence there exist  positive constants $C_1$  and $C_2$  such that
$$
\int_{\Omega\setminus E  }\frac{\phi(t|x-y|^{-\alpha})}{|x-y|^{2n}}dy \ge C_1
\frac{|\Omega\setminus E|}{|\Omega|^2} \phi( t|\Omega|^{-\alpha/n}) \ge  C_1C
\frac1{|E|}\frac{|\Omega\setminus E|}{|\Omega| } \phi( C_2t|E|^{-\alpha/n})
$$
as desired.

If $r<\frac1{8\kz}\diam\Omega$, write
\begin{eqnarray*}
\int_{ \Omega\setminus E }\frac{\phi(t|x-y|^{-\alpha})}{|x-y|^{2n}}dy&&
=\int_{( \Omega\setminus E)   \cap B(x,r) } \frac{\phi(t|x-y|^{-\alpha})}{|x-y|^{2n}}dy+
\int_{( \Omega\setminus E) \setminus  B(x,r)  } \frac{\phi(t|x-y|^{-\alpha})}{|x-y|^{2n}}dy.
\end{eqnarray*}
By \eqref{deltafzz}, for $y\in B(x,r)$  we have
$$\phi(t|x-y|^{-\alpha})\ge \frac{1}{2^{3n}{\overline\Lambda_\phi(\alpha)}}\phi(tr^{-\alpha})
\left(\frac{|x-y|}{r}\right)^n\ge \frac{1}{2^{3n}{\overline\Lambda_\phi(\alpha)}}\phi(tr^{-\alpha})
\left(\frac{|x-y|}{r}\right)^{2n}.$$
Thus
\begin{align*}\int_{( \Omega\setminus E) \cap B(x,r) } \frac{\phi(t|x-y|^{-\alpha})}{|x-y|^{2n}}dy
 &\ge \frac{1}{2^{3n}{\overline\Lambda_\phi(\alpha)}}\frac{ \phi(t r^{-\alpha})}{r^{2n}}|( \Omega\setminus E)\cap B(x,r)|.
 \end{align*}
Note that
$$| ( \Omega\setminus E) \cap B(x,r)|=| \Omega\cap B(x,r) |-|E\cap B(x,r)|=|E|-|E\cap B(x,r)|=|E\setminus  B(x,r) |.$$
By  \eqref{deltafzz},  for $y \in E\setminus B(x,r)$ we have
$$\phi(tr^{-\alpha})\ge
\frac{1}{2^{3n}{\overline\Lambda_\phi(\alpha)}}
\phi(t|x-y|^{-\alpha})\left(\frac{r}{|x-y|}\right)^n\ge \frac{1}{2^{3n}{\overline\Lambda_\phi(\alpha)}}
\phi(t|x-y|^{-\alpha})\left(\frac{r}{|x-y|}\right)^{2n}.$$
Therefore,
\begin{align*}\int_{( \Omega\setminus E) \cap B(x,r) } \frac{\phi(t|x-y|^{-\alpha})}{|x-y|^{2n}}dy&
 \ge\left(\frac{1}{2^{3n}{\overline\Lambda_\phi(\alpha)}}\right)^2 \int_{E \setminus B(x,r) } \phi(t|x-y|^{-\alpha})\frac{1}{|x-y|^{2n}} dy.
   \end{align*}
 Since $[(  \Omega\setminus E)\setminus B(x,r)]\cup [E \setminus B(x,r)]= \Omega\setminus B(x,r)$, we obtain

\begin{eqnarray*}
\int_{\Omega\setminus E  }\phi(t|x-y|^{-\alpha})\frac{1}{|x-y|^{2n}}dy&&\ge \min\left\{\left(\frac{1}{2^{3n}{\overline\Lambda_\phi(\alpha)}}\right)^2,1\right\} \int_{\Omega\setminus B(x,r)  } \phi(t|x-y|^{-\alpha})\frac{1}{|x-y|^{2n}}dy.
\end{eqnarray*}
By \eqref{e3.2},
$\Omega\cap (B(x,2\kz r )\setminus B(x,2r))$ is not empty set, and hence containing some point, say $z$.
Then $$\Omega\cap B(z,r)\subset\Omega\cap [
(B(x,3\kz r )\setminus B(x,r)]\subset \Omega\setminus B(x,r)$$
and $$|\Omega\cap
[(B(x,3\kz r )\setminus B(x,r)]| \ge |B(z,r)\cap \Omega|\ge \theta  r^n.$$
Thus, we have
\begin{align*}
\int_{\Omega\setminus B(x,r)  } \phi(t|x-y|^{-\alpha})\frac{1}{|x-y|^{2n}}dy
&\ge   \int_{\Omega\cap
[B(x,3\kz r )\setminus B(x,r)] } \phi(t|x-y|^{-\alpha})\frac{1}{|x-y|^{2n}}dy \ge  C_3r^{-n} \phi( tr^{-\alpha}),
\end{align*}
where $C_3$ is positve constant.
Note that  $\theta r^n\le |E|\le \omega_n r^n$  and $\frac{|\Omega \setminus E|}{|\Omega|}\le 1$. The proof of Lemma \ref{l2.7x} is completed.
\end{proof}

 \begin{lem}\label{l3.2}
Let $\alpha\in (-n,0)$  and $\phi $  be a Young function satisfying \eqref{delta0} and \eqref{deltafz}. Suppose that $\Omega$ is a   globally $n$-regular domain.
Then there exists a constant $C>0$ depending only on $n, \alpha,\phi$ and $\theta$ such that
$$\|u\|_{L^{n/|\alpha|}(\Omega)}\le C\|u\|_{{ {\bf\dot B}}^{\alpha,\phi}(\Omega)}\quad
$$
    whenever
    $u \in {\bf{ \dot B}}^{\alpha,\phi}(\Omega)$ satisfies
\begin{equation}\label{condu}
\mbox{$u\ge0 $ in $\Omega$ and $|\{x\in \Omega, u=0\}|\ge \frac12|\Omega|$\ {\rm if} $|\Omega|<\fz$}
\end{equation}
or
\begin{equation}\label{condufz}
\mbox{$u\ge0 $ in $\Omega$ and $|\{x\in \Omega, u>a\}|<\fz  \ \forall a>0$\ {\rm  if} $|\Omega|=\fz$}.
\end{equation}
  \end{lem}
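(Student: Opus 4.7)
My plan is to obtain the $L^{n/|\alpha|}$-bound from Lemma \ref{l2.7x} via a dyadic level-set decomposition of $u$, adapting the truncation argument of Di Nezza--Palatucci--Valdinoci to the Orlicz setting. After normalizing so that $\|u\|_{{\bf\dot B}^{\alpha,\phi}(\Omega)}\le 1$, I introduce $A_k:=\{x\in\Omega:u(x)>2^k\}$ and $B_k:=A_k\setminus A_{k+1}$ for $k\in\mathbb Z$. Either hypothesis \eqref{condu} or \eqref{condufz} (with the stated convention when $|\Omega|=\fz$) forces $|\Omega\setminus A_k|/|\Omega|\ge 1/2$ for every $k$, which is precisely the sense in which $u$ has ``zero median'' and is what makes Lemma \ref{l2.7x} effective here. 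A layer-cake computation reduces the target to $\sum_k 2^{kn/|\alpha|}|A_k|\lesssim 1$.

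The key observation is that $|u(x)-u(y)|>2^k$ whenever $x\in A_{k+1}$ and $y\in\Omega\setminus A_k$; combining monotonicity of $\phi$ with Lemma \ref{l2.7x} applied at $E=A_k$ and $t=2^k$ yields, for each $k$,
\[
\int_{A_{k+1}}\int_{\Omega\setminus A_k}\phi\!\left(\frac{2^k}{|x-y|^{\alpha}}\right)\frac{dx\,dy}{|x-y|^{2n}} \ge \frac{C_1|A_{k+1}|}{2|A_k|}\,\phi\!\left(C_2\,2^k|A_k|^{|\alpha|/n}\right).
\]
To sum these without overcounting, I restrict the outer integration to the dyadic shell $B_{k+1}=A_{k+1}\setminus A_{k+2}$: every pair $(x,y)$ with $u(x)\in B_a$ and $u(y)\in B_b$ then appears for the unique index $k=a-1$ (and only when $b\le a-2$). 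The resulting disjoint sums fit inside the full Besov integral, giving
\[
\sum_k\frac{|B_{k+1}|}{|A_k|}\,\phi\!\left(C_2\,2^k|A_k|^{|\alpha|/n}\right) \le \frac{2}{C_1}.
\]

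The main obstacle is converting this weighted summed estimate into $\sum_k 2^{kn/|\alpha|}|B_k|\lesssim 1$. My plan is a two-step argument. First, iterate the per-$k$ inequality to derive the weak-type bound $\sup_k\xi_k\le M$, where $\xi_k:=2^{kn/|\alpha|}|A_k|$: condition \eqref{deltafz} makes $\phi(t)/t\to\fz$ as $t\to\fz$, so $|A_{k+1}|$ contracts once $\xi_k$ becomes large, preventing runaway growth. Second, on the regime $\{k:\tau_k\ge 1\}$ with $\tau_k:=C_2\,2^k|A_k|^{|\alpha|/n}$, the convexity lower bound $\phi(s)\ge s\,\phi(1)$ converts the summed estimate into $\sum 2^k|B_{k+1}||A_k|^{|\alpha|/n-1}\lesssim 1$, and the weak bound $|A_k|\le M\,2^{-kn/|\alpha|}$ makes this dominate $\sum 2^{kn/|\alpha|}|B_{k+1}|$ up to the constant $M^{(n-|\alpha|)/n}$; the complementary regime $\tau_k<1$ is handled directly via \eqref{delta0}, which gives $\phi(s)\gtrsim s^{n/(1-\alpha)}$ near zero and thereby forces fast enough decay of $\xi_k$ there. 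Throughout, the growth assumptions \eqref{delta0}--\eqref{deltafz} are what keep the implicit constants finite.
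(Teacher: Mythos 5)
Your setup agrees with the paper's up to and including the per-index estimate: both approaches decompose $u$ into dyadic superlevel sets, exploit the disjointness of $D_{k+1}\times(\Omega\setminus A_k)$, and invoke Lemma \ref{l2.7x} with $E=A_k$ (together with $|\Omega\setminus A_k|/|\Omega|\ge 1/2$, which is exactly why the hypotheses \eqref{condu}/\eqref{condufz} are imposed). The divergence is entirely in the conversion step, and this is where your plan has concrete errors. First, the claim that \eqref{deltafz} forces $\phi(t)/t\to\infty$ is false: for instance $\phi(t)=t$ satisfies both \eqref{delta0} and \eqref{deltafz} whenever $\alpha\in(-n,1-n)$, yet $\phi(t)/t\equiv 1$. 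Second, and more substantively, the per-$k$ inequality controls $|B_{k+1}|/|A_k|$, \emph{not} $|A_{k+1}|/|A_k|$; since $|A_{k+1}|=\sum_{j\ge k+1}|B_j|$, ``$|A_{k+1}|$ contracts once $\xi_k$ is large'' does not follow from that inequality, so your weak-type bound $\sup_k\xi_k\le M$ is not justified as written (it would need an argument at the maximizing index exploiting monotonicity of $\{\xi_k\}$ around its maximum, not a simple iteration). Third, your treatment of the regime $\tau_k<1$ misattributes the needed lower bound: \eqref{delta0} (via \eqref{delta00}) yields an \emph{upper} bound $\phi(s)\lesssim s^{n/(1-\alpha)}$ near $0$, not a lower bound; the correct and needed inequality, $\phi(xs)\gtrsim\phi(x)s^{n/|\alpha|}$ for $s\in(0,1]$, comes from \eqref{deltafz} (i.e. from inverting \eqref{deltafzz}), and the exponent must be $n/|\alpha|$ — not $n/(1-\alpha)$ — for the power of $|A_k|$ to cancel. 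In short, you are re-deriving the technical route of Di Nezza--Palatucci--Valdinoci, but the sketch of the weak-type step is wrong and the small-argument bound is cited from the wrong hypothesis with the wrong exponent.

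The paper avoids all of this with a single trick that replaces your normalization and regime splitting: instead of fixing $\|u\|_{{\bf\dot B}^{\alpha,\phi}}\le 1$, it leaves $\lambda$ free in the Besov double integral and then \emph{chooses}
$\lambda=M\bigl(\sum_i 2^{in/|\alpha|}a_i\bigr)^{|\alpha|/n}$ with a suitable explicit $M$. With this $\lambda$, the argument of $\phi$ in the summed estimate takes the form $\frac{C_2}{M}\cdot s_l$ with $s_l=\frac{2^{l-1}a_{l-1}^{|\alpha|/n}}{(\sum_i 2^{in/|\alpha|}a_i)^{|\alpha|/n}}\le 1$ \emph{automatically}, so the inequality $\phi\bigl(\tfrac{C_2}{M}s\bigr)\ge 2^{-3n}\overline\Lambda_\phi(\alpha)^{-1}\phi\bigl(\tfrac{C_2}{M}\bigr)\,s^{n/|\alpha|}$ (from \eqref{deltafzz}) applies uniformly — no $\tau_k\ge 1$ regime ever occurs and no weak-type bound is required. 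The sum then linearizes, $\sum_l d_l\,2^{(l-1)n/|\alpha|}/\sum_i a_i 2^{in/|\alpha|}$ is bounded below by the fixed constant coming from \eqref{ev2.x3}, and the choice of $M$ forces the double integral to exceed $1$, which immediately gives $\|u\|_{{\bf\dot B}^{\alpha,\phi}}\ge\lambda$ and hence the claimed $L^{n/|\alpha|}$ bound. This is both shorter and more robust than the truncation-plus-weak-type route you are attempting.
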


  \begin{proof}

  Let $u \in {\bf \dot B}^{\alpha,\phi}(\Omega)$
    satisfy \eqref{condu} or \eqref{condufz}.
    Obviously, we may assume that $u\not \equiv0$.
Without loss of generality, we may also assume that $u$ is bounded.
Indeed, for $N\ge 1$ let $$u^N=u\chi_{\{x\in \Omega: u<2^N\}}+ 2^N \chi_{\{x\in \Omega: u\ge 2^N\}}.$$
Note that $\|u \|_{L^{n/|\alpha|}(\Omega)}=\lim_{N\to\fz}\|u^N\|_{L^{n/|\alpha|}(\Omega)}$
and $\sup_{N\in\nn}\|u^N\|_{{\bf\dot B}^{\alpha,\phi}(\Omega)}\le \|u \|_{{\bf \dot B}^{\alpha,\phi}(\Omega)}$.
If  $\|u^N\|_{L^{n/|\alpha|}(\Omega)}\le C\|u^N\|_{{\bf \dot B}^{\alpha,\phi}(\Omega)} $ hold
for all $N\ge1$, by sending $N\to\infty$, we have
 $\|u \|_{L^{n/|\alpha|}(\Omega)}\le C\|u \|_{{\bf \dot B}^{\alpha,\phi}(\Omega)} $ as desired.
Moreover, when $|\Omega|=\fz$, we may further assume $|\{x\in \Omega, u>0\}|<\fz  $. Indeed,
for $N\le 0$ let $$u_N=(u-2^N)\chi_{\{x\in \Omega: u(x)>2^N\}}.$$
By \eqref{condufz}, we have
$$|\{x\in \Omega, u_N(x)>0\}|= |\{x\in \Omega, u(x)>2^N\}|<\fz  .$$
Note that
 $\|u \|_{L^{n/|\alpha|}(\Omega)}=\lim_{N\to\fz}\|u_N\|_{L^{n/|\alpha|}(\Omega)}$
and $\sup_{N\in\nn}\|u_N\|_{{\bf\dot B}^{\alpha,\phi}(\Omega)}\le \|u \|_{{\bf \dot B}^{\alpha,\phi}(\Omega)}$.
 If  $\|u_N\|_{L^{n/|\alpha|}(\Omega)}\le C\|u_N\|_{{\bf\dot B}^{\alpha,\phi}(\Omega)} $ hold
for all $N\le0$, by sending $N\to-\infty$, we have $u\in L^{n/|\alpha|}(\Omega)$
and  $\|u \|_{L^{n/|\alpha|}(\Omega)}\le C\|u \|_{{\bf B}^{\alpha,\phi}(\Omega)} $ as desired.

Under above assumptions on $u$, we have $ u\in L^{n/|\alpha|}(\Omega)$.
Indeed, in the case $|\Omega|<\fz$, the   boundedness    of $u $ implies that
 $ u\in L^{n/|\alpha|}(\Omega)$.
In the case $|\Omega|=\fz$,
 the assumption
$|\{x\in\Omega: u(x)> 0\}|<\fz$ and
  the  boundedness of $u$ also gives
 $ u\in L^{n/|\alpha|}(\Omega)$.
  Write $$A_k:=\{z\in B: u(z)>2^k\}\quad{\rm and }\quad D_k:=A_k\setminus A_{k+1}=\{z\in B: 2^k<u(z) \le 2^{k+1}\},$$
  and $a_k :=|A_k| $ and    $ d_k :=|D_k|$ for $k\in\zz$.
Then
$$  \|u\|^{n/|\alpha|}_{L^{n/|\alpha|}(\Omega)}\le
\sum_{i\in\zz} d_i 2^{(i+1)n/|\alpha|}\le 2^{n/|\alpha|}\sum_{i\in\zz} a_{i } 2^{in/|\alpha|} $$
and
\begin{equation}\label{ev2.x3} \sum_{i\in\zz} a_{i } 2^{in/|\alpha|}\le
\sum_{i\in\zz} \sum_{j\ge i}d_{j } 2^{in/|\alpha|}=\sum_{j}d_j\sum_{i\le j}2^{in/|\alpha|}\le \frac{1}{1-2^{n/\alpha}}\sum_{j}d_j2^{jn/|\alpha|}\le \frac{1}{1-2^{n/\alpha}}\|u\|^{n/|\alpha|}_{L^{n/|\alpha|}(\Omega)}.
\end{equation}

On the other hand,
observe that $\{D_l\}_{l\in\zz}$, and hence
$\{D_l\times (\Omega\setminus A_{l-1})\}_{l\in\zz}$,  are disjoint for each other,
and that for any $(x,y)\in D_l\times (\Omega\setminus A_{l-1})$,
we have
$u(x)\ge 2^l$ and $ u(y)\le 2^{l-1}$,  and hence  $|u(x)-u(y)|\ge 2^{l-1}$.
Therefore,
\begin{eqnarray}\label{e2.22}
 H: =\iint_{\Omega\times \Omega}\phi\left(\frac{|u(x)-u(y)|}{\lambda|x-y|^{\alpha}}\right)
 \frac{dxdy}{|x-y|^{2n}}
 \ge  \sum_{l\in\zz}\int_{D_l}
 \int_{ \Omega\setminus A_{l-1} } \phi\left(\frac{2^{l-1}}{\lambda|x-y|^{\alpha}}\right)\frac{dy}{|x-y|^{2n}}dx
\nonumber.
\end{eqnarray}
 If $|\Omega|<\fz$, by \eqref{condu}, we know that $a_k \le \frac12|\Omega|$ for all $k\in\zz$.
If $|\Omega|=\fz$, then, by \eqref{condufz} we have $a_l<\fz$ for all $l\in\zz$.
Thus, applying Lemma \ref{l2.7x}, we obtain
\begin{equation}\label{ev2.1}
H  \ge  C_1\sum_{l\in\zz,a_{l-1}\ne0}\frac{d_l}{a_{l-1}}  \frac{|\Omega\setminus A_{l-1}|}{|\Omega|} \phi\left(\frac{C_22^{l-1}a_{l-1}^{|\alpha|/n} }{\lambda }\right)  \ge \frac12 C_1\sum_{l\in\zz,a_{l-1}\ne0}\frac{d_l}{a_{l-1}}  \phi\left(\frac{C_22^{l-1}a_{l-1}^{|\alpha|/n} }{\lambda }\right).\end{equation}

Let $\lambda =    M(\sum_{i\in\zz} a_{i } 2^{in/|\alpha|})^{|\alpha|/n}$ with
$$M =  C_2\left[\phi^{-1}\left(\frac{2^{3n+2}2^{ n/|\alpha|}\overline\Lambda_\phi(\az) }{C_1(1-2^{n/\alpha})}\right) \right]^{-1} .$$
Noting $\phi(sC_2/M)\ge 2^{-3n }[\overline\Lambda_\phi(\alpha)]^{-1}\phi(C_2/M)  s^{n/|\alpha|}$ for any $s\in(0,1)$ as given in \eqref{deltafzz}, by  \eqref{ev2.1}  we obtain
\begin{eqnarray*}
H
&&\ge\frac{C_1}{2} \sum_{\substack{l\in \zz , \, a_{l-1}\neq0}}\frac{d_l}{a_{l-1}}
\phi\left(\frac{C_2}{M } \frac{ 2^{l-1}a_{l-1}^{|\alpha|/n} }{  (  \sum_{i\in \zz }  2^{i n/|\alpha|} a_i)^{|\alpha|/n}  }\right) \\
&&\ge\frac{C_1}{2^{3n+1 }\overline\Lambda_\phi(\alpha) }\sum_{\substack{l\in \zz , \, a_{l-1}\neq0}}\frac{d_l}{a_{l-1}} \frac{2^{(l-1)n/|\alpha|}a_{l-1}}
{  \sum_{i\in \zz }  2^{i n/|\alpha|} a_{i }}\phi\Big(\frac{C_2}{M }\Big)\\
&&\ge\frac{C }{2^{3n +1}\overline\Lambda_\phi(\alpha) } \frac{\sum_{\substack{l\in \zz , \, a_{l-1}\neq0}} d_l  2^{(l-1)n/|\alpha|} }
{  \sum_{i\in \zz }  2^{i n/|\alpha|} a_{i }}\phi\Big(\frac{C_2}{M }\Big ). \\
\end{eqnarray*}
Since $a_{l-1}=0$ implies that $d_l=0$, we have
$\sum_{\substack{l\in \zz,a_{l-1}\ne0 }} d_l  2^{(l-1)n/|\alpha|}= \sum_{\substack{l\in \zz }} d_l  2^{(l-1)n/|\alpha|}$.
Thus by \eqref{ev2.x3},
\begin{eqnarray*}
H
&&\ge\frac{C_1(1-2^{n/\alpha}) }{2^{3n+1 }2^{ n/|\alpha|}\overline\Lambda_\phi(\alpha) }\phi\Big(\frac{C_2}{M } \Big)=2,
\end{eqnarray*}
 which  implies that
  $M\left ( \sum_{i\in\zz} a_{i } 2^{in/|\alpha|}\right)^{|\alpha|/n}\le \|u\|_{{\bf\dot B}^{\alpha,\phi}(\Omega)},$
  that is, $\|u\| _{L^{n/|\alpha|}(\Omega)} \le 2 M ^{-1} \|u\|_{{\bf \dot B}^{\alpha,\phi}(\Omega)}$
as desired. The proof of Lemma \ref{l3.2} is completed.
  \end{proof}

From Lemma \ref{l3.2} and the media value  we conclude the following Lemma \ref{l2.8x}.
\begin{lem}\label{l2.8x}
 Let $\alpha\in (-n,0)$ and $\phi $  be a Young function satisfying \eqref{delta0} and \eqref{deltafz}.

(i) If  $\Omega\subset\rn$ is a bounded  globally $n$-regular domain,
 then there exists a constant $C>0$ depending only on $n, \alpha,\phi$ and $\theta$ such that
 $$ \|u-u_\Omega\|_{L^{n/|\alpha|}(\Omega)}\le C\|u\|_{{\bf \dot B}^{\alpha,\phi}(\Omega)},\quad
 \mbox{ $\forall\ u \in {\bf\dot B}^{\alpha,\phi}(\Omega)$.}
$$

(ii)  If  $\Omega\subset\rn$ is a unbounded   globally $n$-regular domain,
 then  there exists a constant $C>0$ depending only on $n, \alpha,\phi$ and $\theta$ such that $$ \|u \|_{L^{n/|\alpha|}(\Omega)}\le C\|u\|_{{\bf\dot B}^{\alpha,\phi}(\Omega)},\quad
 \mbox{ $\forall\ u \in {\bf \dot B}^{\alpha,\phi}(\Omega)$  with
 $|\{x\in\Omega: |u|>a\}|<\fz$ for all $a>0$.}
$$

   \end{lem}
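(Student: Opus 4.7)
The plan is to reduce both parts of the lemma to Lemma~\ref{l3.2} by decomposing $u$ into nonnegative pieces that satisfy \eqref{condu} or \eqref{condufz}. Two structural observations underlie this reduction. First, the seminorm $\|\cdot\|_{{\bf\dot B}^{\alpha,\phi}(\Omega)}$ is translation invariant, since it depends on $u$ only through the differences $u(x)-u(y)$; thus $\|u-c\|_{{\bf\dot B}^{\alpha,\phi}(\Omega)}=\|u\|_{{\bf\dot B}^{\alpha,\phi}(\Omega)}$ for every $c\in\rr$. Second, whenever $T:\rr\to\rr$ is $1$-Lipschitz with $T(0)=0$, the pointwise bound $|T(u(x))-T(u(y))|\le |u(x)-u(y)|$ combined with the monotonicity of $\phi$ forces $\|T\circ u\|_{{\bf\dot B}^{\alpha,\phi}(\Omega)}\le \|u\|_{{\bf\dot B}^{\alpha,\phi}(\Omega)}$; I will apply this with $T(s)=s^+$ and $T(s)=s^-$.

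For part (i), I would first choose a median $m\in\rr$ of $u$, that is, a number with $|\{u>m\}|\le|\Omega|/2$ and $|\{u<m\}|\le|\Omega|/2$; such an $m$ exists for every measurable $u$ on a bounded $\Omega$. Writing $u-m=v_+-v_-$ with $v_\pm=(u-m)^\pm$, the zero set of $v_+$ equals $\{u\le m\}$ and hence has measure at least $|\Omega|/2$, and symmetrically for $v_-$; so both $v_\pm$ satisfy \eqref{condu}. By the two observations above, $\|v_\pm\|_{{\bf\dot B}^{\alpha,\phi}(\Omega)}\le \|u\|_{{\bf\dot B}^{\alpha,\phi}(\Omega)}$, and Lemma~\ref{l3.2} then yields $\|v_\pm\|_{L^{n/|\alpha|}(\Omega)}\le C\|u\|_{{\bf\dot B}^{\alpha,\phi}(\Omega)}$. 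The triangle inequality gives $\|u-m\|_{L^{n/|\alpha|}(\Omega)}\le 2C\|u\|_{{\bf\dot B}^{\alpha,\phi}(\Omega)}$; in particular $u\in L^1(\Omega)$ (as $\Omega$ is bounded and $n/|\alpha|>1$), so $u_\Omega$ is well defined. Finally, the standard estimate $|m-u_\Omega|\le\fint_\Omega |u-m|\,dz$, combined with H\"older's inequality, delivers $\|u-u_\Omega\|_{L^{n/|\alpha|}(\Omega)}\le 2\|u-m\|_{L^{n/|\alpha|}(\Omega)}$, completing this part.

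For part (ii), I would use the decomposition $u=u^+-u^-$. Since $\{u^\pm>a\}\subset\{|u|>a\}$ has finite measure for every $a>0$, both $u^\pm$ satisfy \eqref{condufz}. The truncation observation again furnishes $\|u^\pm\|_{{\bf\dot B}^{\alpha,\phi}(\Omega)}\le \|u\|_{{\bf\dot B}^{\alpha,\phi}(\Omega)}$; Lemma~\ref{l3.2} then bounds $\|u^\pm\|_{L^{n/|\alpha|}(\Omega)}$ by $C\|u\|_{{\bf\dot B}^{\alpha,\phi}(\Omega)}$, and summing via the triangle inequality finishes the proof.

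The argument is short once Lemma~\ref{l3.2} is in hand; the only conceptual checks are the translation invariance and truncation monotonicity of the $ {\bf\dot B}^{\alpha,\phi}$-seminorm and the existence of a median, both of which are routine. The genuine work has already been done in Lemma~\ref{l3.2}, which rests on the geometric estimate of Lemma~\ref{l2.7x}; no further obstacle is expected here.
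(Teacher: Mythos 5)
Your proof follows essentially the same route as the paper: decompose around the median via $u-m=(u-m)^+-(u-m)^-$ in part (i) (and via $u=u^+-u^-$ in part (ii)), check \eqref{condu} (resp.\ \eqref{condufz}), invoke the truncation monotonicity of the ${\bf\dot B}^{\alpha,\phi}$-seminorm to get $\|v_\pm\|_{{\bf\dot B}^{\alpha,\phi}(\Omega)}\le\|u\|_{{\bf\dot B}^{\alpha,\phi}(\Omega)}$, apply Lemma~\ref{l3.2}, and finally pass from the median to the mean. The paper records the pointwise identity $|u(x)-u(y)|=|u_+(x)-u_+(y)|+|u_-(x)-u_-(y)|$ and the exact equality $\|u-m\|^{n/|\alpha|}_{L^{n/|\alpha|}}=\|u_+\|^{n/|\alpha|}_{L^{n/|\alpha|}}+\|u_-\|^{n/|\alpha|}_{L^{n/|\alpha|}}$ where you use the $1$-Lipschitz truncation principle and the triangle inequality, but these are the same observations with cosmetically different bookkeeping.
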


\begin{proof}
  (i) Suppose that $\Omega \subset\rn$ is a bounded globally $n$-regular domain.
 For any  $u \in {\bf\dot B}^{\alpha,\phi}(\Omega)$,
set  the median value $$m_u(\Omega):=\inf\left\{c\in\rr: |\{x\in B: u>c \}|\le \frac12|\Omega|\right\}.$$
Then
$$|\{x\in \Omega: u>m_u(\Omega) \}|\le \frac12|\Omega|\quad{\rm and }\quad|\{x\in \Omega: u<m_u(\Omega) \}|\le \frac12|\Omega|.$$
Write $u_+=[u-m_u(\Omega)]\chi_{u\ge m_u(\Omega)}$ and $u_-=-[u-m_u(\Omega)]\chi_{u\le m_u(\Omega)}$.
Then  $u_{\pm}$ satisfies \eqref{condu}, and hence by Lemma \ref{l3.2}, we obtain
$\|u_\pm\| _{L^{n/|\alpha|}(\Omega)} \le C \|u_\pm\|_{{\bf \dot B}^{\alpha,\phi}(\Omega)}$,

On the other hand, note that  $u-m_u(\Omega)=u_+-u_-$ and
$$\|u-m_u(\Omega)\|_{L^{n/|\alpha|}(\Omega)}^{n/|\alpha|}= \|u_+\|_{L^{n/|\alpha|}(\Omega)}^{n/|\alpha|}+
 \|u_-\|_{L^{n/|\alpha|}(\Omega)}^{n/|\alpha|}.$$
 Moreover, by
 $$|u(x)-u(y)|=|[u(x)-m_u(\Omega)]-[u(y)-m_u(\Omega)]|= |u_+(x)-u_+(y)|+|u_-(x)-u_-(y)|,\quad\forall x,y\in\Omega,$$
 we have $\|u_\pm\|_{{\bf\dot B}^{\alpha,\phi}(\Omega)}\le \|u\|_{{\bf \dot B}^{\alpha,\phi}(\Omega)}$.
 We then conclude
  $$\|u- u_\Omega \|_{L^{n/|\alpha|}(\Omega)}\le 2\|u-m_u(\Omega)\|_{L^{n/|\alpha|}(\Omega)}\le C\|u\|_{{\bf \dot B}^{\alpha,\phi}(\Omega)}$$
 as desired.

(ii) Suppose that $\Omega \subset\rn$ is a unbounded globally $n$-regular domain.
Assume that $u \in {\bf \dot B}^{\alpha,\phi}(\Omega)$ satisfies  $|\{x\in\Omega: |u|>a\}|<\fz$ for any $a>0$.
Write $u_+=u\chi_{\{x\in\Omega: u(x)\ge0\}}$ and
$u_-=-u\chi_{\{x\in\Omega: u(x)\le0\}}$. Then $u=u_+-u_-$,
and  $u_\pm$ satisfies \eqref{condufz}. By Lemma \ref{l3.2},
$\|u_\pm\| _{L^{n/|\alpha|}(\Omega)} \le C \|u_\pm\|_{{\bf\dot B}^{\alpha,\phi}(\Omega)}$.
Note that
 $$|u(x)-u(y)|= |u_+(x)-u_+(y)|+|u_-(x)-u_-(y)|,\quad\forall x,y\in\Omega,$$
and hence, $\|u_\pm\|_{{\bf\dot B}^{\alpha,\phi}(\Omega)}\le \|u\|_{{\bf\dot B}^{\alpha,\phi}(\Omega)}$.
Combining with $\|u \|_{L^{n/|\alpha|}(\Omega)}^{n/|\alpha|}= \|u_+\|_{L^{n/|\alpha|}(\Omega)}^{n/|\alpha|}+
 \|u_-\|_{L^{n/|\alpha|}(\Omega)}^{n/|\alpha|},$
 we conclude $\|u \| _{L^{n/|\alpha|}(\Omega)}
 \le C \|u \|_{{\bf\dot B}^{\alpha,\phi}(\Omega)}$ as desired. This completes the proof of Lemma \ref{l2.8x}.
  \end{proof}

 Theorem \ref{mainthm0} then follows from Lemma \ref{l2.8x}.

\begin{proof}[Proof of Theorem 1.1.]
Note that $\{B(z,R)\}_{z\in\rn,R>0}$  are globally $n$-regular domains
  with the same constant $\theta$.  Indeed, let $\theta>0$ such that
 $$ \mbox{$|B(0,1)\cap B(x,r )|\ge \theta r^n$ for all $x\in B(0,1)$ and $r<2$}.$$
   Then for any $z\in\rn$ and $R>0$, we have
 $$ |B(z,R)\cap B(x,r)|=|B(0,R)\cap B(x-z,r)|=R^n|B(0,1)\cap B((x-z)/R,r/R)|\ge \theta R^n (r/R)^n= \theta r^n\quad $$
 whenever $0<r<2R$ and $x\in B(z,R)$.
  Thus by Lemma \ref{l2.8x} (i)  we know that
  there exists a constant $C>0$ such that
    $$ \|u-u_B\|_{L^{n/|\alpha|} (B )}\le C\|u\|_{{\bf\dot B}^{\alpha,\phi} (B )}\quad\forall {\ \rm balls }\ B\ {\rm and}\  u\in {\bf\dot{B}}^{\alpha,\phi} (B) $$
    as desired.
Especially, given any $u\in{\bf\dot{B}}^{\alpha,\phi}(\rn)$    we have    $$ \|u-u_{B(0,2^k)}\|_{L^{n/|\alpha|} (B(0,2^k))}\le C\|u\|_{{\bf\dot B}^{\alpha,\phi} (B(0,2^k) )} \le  C\|u\|_{{\bf\dot B}^{\alpha,\phi} (\rn)}\quad\forall k\in\nn.$$
 Therefore
  $$|u_{B(0,2^{k})}-u_{B(0,2^{k+1})} |\le \omega_n^{\alpha/n}2^{-k|\alpha|}\|u -u_{B(0,2^{k+1})} \|
  _{L^{n/|\alpha|} (B(0,2^{k+1}))}\le C2^{-k|\alpha|}
  \|u\|_{{\bf\dot B}^{\alpha,\phi} (\rn)}\quad\forall k\in\nn.$$
 Thus $u_{B(0,2^{k})}$ converges to some $c\in\rr$ as $k\to\fz$, and $$|u_{B(0,2^{k})}-c|\le \sum\limits_{l\ge k}|u_{B(0,2^{l})}-u_{B(0,2^{l+1})}|\le \sum\limits_{l\ge k}  C2^{-l|\alpha|}
  \|u\|_{{\bf\dot B}^{\alpha,\phi} (\rn)}\le C\frac{2^{-k|\alpha|}}{1-2^\alpha}
  \|u\|_{{\bf\dot B}^{\alpha,\phi} (\rn)}.$$
   Since
\begin{align*} \|u -c  \|
  _{L^{n/|\alpha|} (B(0,2^{k }))}
 & \le
 \|u -u_{B(0,2^{k})} \|
  _{L^{n/|\alpha|} (B(0,2^{k }))}+ |B(0,2^{k })|^{|\alpha|/n}|u_{B(0,2^{k})}-c|
 \le C\|u\|_{{\bf\dot B}^{\alpha,\phi} (\rn)},
 \end{align*}
 letting $k\to\fz$, we obtain
  $ \|u-c\|_{L^{n/|\alpha|} (\rn)}\le C\|u\|_{{\bf\dot B}^{\alpha,\phi} (\rn)} $
  as desired. This completes the proof of Theorem 1.1.
\end{proof}


\begin{rem}\rm
(i) Let $0<s<1$ and
 $1\le p<n/s$. It is proved by \cite[Corollary 25]{cv} and   \cite[Lemma A.1]{sv13} that
 \begin{equation}\label{e3.x8}
\int_{ \rn\setminus E  } {|x-y|^{n+sp}}dy \ge   \frac{C}{|E|^{sp}}
\quad\forall \, \mbox{$E\subset\rn$   with $0<|E|<\fz$},
\end{equation}
 that is, Lemma \ref{l2.7x}  with $\Omega=\rn$, $\alpha=s-n/p$ and $\phi(t)=t^p$.
 Using \eqref{e3.x8}, Di Nezza et al \cite{dpv} proved that
 \begin{eqnarray*}
&&\int_\rn\int_\rn \frac{|u(x)-u(y)|^p}{ |x-y|^{n+sp}}\,d
 {dxdy} \ge   \sum_{\substack{l\in\zz, a_{l-1}\neq0}}
 a_l  a_{l-1} ^{-sp}   \quad\forall   u\in  {\bf\dot B}^s_{pp}(\rn) \mbox{ having bounded supports},
\end{eqnarray*}
which is Lemma \ref{l3.2} with $\Omega=\rn$, $\alpha=s-n/p$ and $\phi(t)=t^p$ essentially.
After several technical arguments, this allows them to obtain
 \begin{equation}\label{e3.x9}
 \|u\|_{L^{np/(n-sp)}(\rn)}\le \|u\|_{{\bf\dot B}^s_{pp}(\rn)} \quad
 \mbox{  $\forall \, u\in  {\bf\dot B}^s_{pp}(\rn)$ having bounded supports,}
 \end{equation}
 See \cite[Section 6]{dpv} for details.    To get \eqref{e3.x9},
 our  proof in Lemma \ref{l3.2} via Orlicz norm   simplify the argument in \cite[Section 6]{dpv}
 by dropping several technical arguments therein.

  (ii)  Lemma \ref{l2.7x}  extends  \eqref{e3.x8} not only  to general $\phi$ but also to globally $n$-regular  domains  $\Omega$.
 Applying Lemma \ref{l2.7x} and an argument
 simpler than \cite[Section 6]{dpv},
 we extend \eqref{e3.x9} to not only  general $\phi$ but also to globally $n$-regular  domains  $\Omega$ as in Lemma \ref{l3.2}.
  Moreover, with the aid of   the median value, we further
  obtain the desired imbedding  of  ${\bf\dot B}^{\alpha,\phi} (\Omega)$  in Lemma 3.2.
  In particular, 
   we give a new and direct proof   to the well-known facts
   \eqref{ebimb} and \eqref{ebimrn}, and also \eqref{ps41} for bounded domains.
   %

(iii) When $\Omega\subset\rn$ is a globally $n$-regular domain and $|\Omega|=\fz$,
  the direct argument above fails to prove  \eqref{bubdah}; the difficulty is to find
a sequence domains $\Omega_R$ which are globally $n$-regular  with
  the same $\theta$ so that $\Omega _R$ is increasing and converges to $\Omega $. 
\end{rem}

Now we prove Theorem \ref{mainthm} and Corollary 1.3.

  \begin{proof}[Proof of Theorem \ref{mainthm}.]

If $\Omega\subset\rn$ is a globally $n$-regular domain, then \eqref{bdah} and \eqref{ubdah} follows from Lemma \ref{l2.8x} directly.
Below assume that  $\Omega\subset\rn$ is a domain
satisfying \eqref{bdah} or \eqref{ubdah}.
With the aid of Lemma \ref{l2.3}, \eqref{condu} and \eqref{condufz},
 and by borrowing some ideas from \cite{hkt08,z14}, we will show that
$\Omega$ is globally $n$-regular.
To this end, take arbitrary $z\in \Omega$ and $0<r< \frac12\diam\Omega$,
and for $j\ge0$, let  $0<b_j\le 1$   such that
\begin{eqnarray}\label{e3.1}
|B(z,b_jr)\cap\Omega| =\frac{1}{2^j}|B(z, r)\cap\Omega|.
\end{eqnarray}
Obviously, $1=b_0>b_j>b_{j+1}>0$ for all $j\ge1$
and
\begin{eqnarray}\label{e3.1x}
|B(z,b_jr)\cap\Omega|=\frac{1}{2}|B(z,b_{j-1}r)\cap\Omega|\quad\forall j\ge0.
\end{eqnarray}

{\it  Case $\diam\Omega=\fz$.}
It suffices to prove that there exists a constant $C>0$ independent of $z, r$ such that
\begin{equation}\label{e3.x2}  \phi \left(  C \frac{|B_{\Omega}(z, b_{j }r)|^{\alpha/n}}{(b_jr-b_{j+1}r)^{\alpha}} \right) \left[  \frac{(b_jr-b_{j+1}r)^n}{|B_\Omega(z,b_jr)| }\right]^{-1}\ge 1\quad\forall j\ge0.
\end{equation}
Indeed, since $\phi( Cs^{-\alpha}) s^{-n}\to 0$ as $s\to \fz$ as given in Lemma 2.1 (ii), we know that
$\phi( Cs^{-\alpha}) s^{-n}\ge 1$
implies that $s\le \Lambda_C $ for some constant $\Lambda_C>0$.
By this and \eqref{e3.x2}, we obtain
$$\frac{(b_jr-b_{j+1}r) }{|B_{\Omega}(z, b_{j }r)|^{ 1/n}}\le \Lambda_C.$$
This together with \eqref{e3.1} yields that
$$ b_jr -b_{j+1}r\le \Lambda_C |B_{\Omega}(z, b_{j }r)|^{1/n}
= \Lambda_C 2^{-j/n}|B_{\Omega}(z,  r)|^{ 1/n},$$
which gives
$$r =\sum_{j\ge0}(b_jr -b_{j+1}r)\le \sum_{j\ge0} \Lambda_C 2^{-j/n}|B_{\Omega}(z,  r)|^{ 1/n}=\Lambda_C  |B_{\Omega}(z,  r)|^{ 1/n}$$
 as desired.

To prove \eqref{e3.x2},   for $j\ge0$  let $u_{z,b_{j+1}r, b_jr}$ be the
function defined by \eqref{defu}.
By Lemma \ref{l2.3}, we have
 $u_{z,b_{j+1}r, b_jr}\in {\bf\dot B} ^{\alpha,\phi}(\Omega)$ and
$$\|u_{z,b_{j+1}r, b_jr} \|_{{\bf\dot B} ^{\alpha,\phi}(\Omega)}\le C (b_jr-b_{j+1}r)^{-\alpha}\left [\phi^{-1}\left(\frac{(b_jr-b_{j+1}r)^n}{|B_\Omega(z,b_jr)| }\right)\right]^{-1}.$$

Since $|\{x\in\Omega: u_{z,b_{j+1}r, b_jr}(x)\ne 0\}|<\fz$,
by \eqref{ubdah} we have $$\|u_{z,b_{j+1}r, b_jr} \|_{L^{n/|\alpha|}(\Omega)}\le C
\|u_{z,b_{j+1}r, b_jr} \|_{{\bf\dot B} ^{\alpha,\phi}(\Omega)}.$$
Note that
$$\|u_{z,b_{j+1}r, b_jr} \|_{L^{n/|\alpha|}(\Omega)}\ge  |\Omega\cap B(x, b_{j+1}r)|^{-\alpha/n}=2^{\alpha/n}|\Omega\cap B(x, b_{j }r)|^{-\alpha/n}.$$
We conclude that
$$2^{\alpha/n}|B_{\Omega}(z, b_{j }r)|^{-\alpha/n}\le C (b_jr-b_{j+1}r)^{-\alpha}\left [\phi^{-1}\left(\frac{(b_jr-b_{j+1}r)^n}{|B_\Omega(z,b_jr)| }\right)\right]^{-1},$$
which implies that
$$\phi^{-1}\left(\frac{(b_jr-b_{j+1}r)^n}{|B_\Omega(z,b_jr)| }\right)
\le    C \frac{|B_{\Omega}(z, b_{j }r)|^{\alpha/n}}{(b_jr-b_{j+1}r)^{\alpha}}.$$

{\it Case $ \diam\Omega<\fz$.}  Note that by a simialr argument as in the case $|\Omega|=\fz$, we have
$b_1r<\Lambda_C|B_\Omega(z,b_1r)|^{1/n}.$
Indeed,  for $j\ge1$,
\begin{equation}\label{e3.x3} \|u_{z,b_{j+1}r, b_jr}-(u_{z,b_{j+1}r, b_jr})_{\Omega}\|_{L^{n/|\alpha|}(\Omega)}\ge \frac12 |\Omega\cap B(x, b_{j+1}r)|^{-\alpha/n}=\Big(\frac12\Big)^{1-\alpha/n}|\Omega\cap B(x, b_{j }r)|^{-\alpha/n}.
\end{equation}
Note that $u_{z,b_{j+1}r, b_jr} -(u_{z,b_{j+1}r, b_jr})_{\Omega}\ge \frac12$ either in $B_\Omega (x,b_{j+1}r)$
or in $\Omega\setminus B_\Omega (x,b_{j}r)$. Since $j\ge 1 $  implies that
$B_\Omega (x,b_{j-1}r)\setminus B_\Omega (x,b_{j }r)\subset \Omega\setminus B_\Omega (x,b_{j}r)$
and hence
$$|\Omega  \setminus B_\Omega (x,b_{j }r)|\ge
|B_\Omega (x,b_{j-1}r)\setminus B_\Omega (x,b_{j }r)|
=  | B_\Omega (x,b_{j }r) |>| B_\Omega (x,b_{j+1}r) |, $$
together with
$|B_\Omega (x,b_{j+1}r)|= \frac12| B_\Omega (x,b_{j }r) |$
we obtain \eqref{e3.x3}. Then by   the same argument as in the case $\diam(\Omega)=\fz$, we are able to prove that
$b_1r<\Lambda_C|B_\Omega(z,b_1r)|^{1/n},$ where the value $\Lambda_C$ needed to be adjusted.

If
$b_1 \ge 1/10  $, by $b_1r<\Lambda_C|B_\Omega(z,b_1r)|^{1/n}$,
we have   $ r<10 \Lambda_C|B_\Omega(z, r)|^{1/n}$ as desired.
Assume that $b_1<1/10$.
  Let $R=\frac25r$ and $y\in B_\Omega(z,r)$ with $|y-z|=b_1r+R/2$.
  Note that since $\Omega$ is  path-connected  and $b_1r+R/2<3\diam\Omega/20$, such $y $ exists. Then $B(z,b_1r)\subseteq B(y,R)\subsetneq B(z,r)$, and  $B(z,b_1r)\cap B(y,R/2)=\emptyset$. Thus if $|B_{\Omega}(y,\wz b_1R)|=\frac12|B_{\Omega}(y, R)|$,
  by $|B_{\Omega}(z,b_1r)|=\frac12|B_\Omega(z,r)|> \frac12|B_\Omega(y,R)|$,   we have $B_{\Omega}(y,\wz b_1R)\cap B_{\Omega}(z,b_1r)\neq\emptyset$, so $\wz b_1\ge 1/2$. Note that by the  above argument, we already have
    $\wz b_1R \le \Lambda_C|B_\Omega(y,\wz b_1R)|^{1/n}$.
    Hence $$\frac25r= R\le 2\Lambda_C|B_\Omega(y, R)|^{1/n}\le  2\Lambda_C|B_\Omega(z, r)|^{1/n},$$
    which gives $5\Lambda_C|B_\Omega(z, r)|^{1/n}\ge  r  $ as desired.
    \end{proof}

  \begin{proof}[Proof of Corollary \ref{c1.2}] (i) Assume that $\Omega\subset\rn$ is
  a bounded  $n$-regular domain. Then it is also  bounded globally $n$-regular domain. Let $u\in {\bf B}^{\alpha,\phi}(\Omega)$.
  Since Jessen's inequality implies that
   $$\phi\left(\frac{|u_\Omega|}\lambda\right)\le
  \phi\left(\fint_\Omega\frac{|u|}\lambda\,dx\right)\le   \fint_\Omega\phi\left(\frac{|u|}\lambda\right)\,dx\le \frac1{|\Omega|}\int_\Omega\phi\left(\frac{|u|}\lambda\right)\,dx\quad\forall \lambda>\|u\|_{L^\phi(\Omega)},$$
  we have $|u_\Omega|\le \|u\|_{L^\phi(\Omega)} \phi^{-1}(|\Omega|^{-1})$.
  By Lemma \ref{l2.8x} (i), we then have
  $$\|u\|_{L^{n/|\alpha|}(\Omega)}\le
  \|u-u_\Omega\|_{L^{n/|\alpha|}(\Omega)}+|u_\Omega||\Omega|^{ |\alpha|/n}\le
    C   \|u\|_{{\bf B}^{\alpha,\phi}(\Omega)}$$
  as desired.

Conversely, assume that $\Omega\subset\rn$ is
  a bounded  domain satisfying  that
    $$\|u\|_{L^{n/|\alpha|}(\Omega)}
  \le C   \|u\|_{{\bf B}^{\alpha,\phi}(\Omega)}\quad\forall u\in {\bf B}^{\alpha,\phi}(\Omega) $$
  for some constant $C>0$.  Considering Lemma \ref{l2.4},
 by the same argument as in the case $|\Omega|=\fz$ in Theorem 1.1, we
  show that $\Omega$ is  $n$-regular; the details are  omitted.
The proof of Corollary \ref{c1.2} is completed.
  \end{proof}

\medskip

\noindent {\bf Acknowledgements.} The   author  would like to thank Professor Yuan Zhou
 for  several valuable discussions.  The author also would like to
 thank the supports of
 National Natural Science of Foundation of China (No. 11601494).

\medskip



\end{document}